\newtheorem*{theorem*}{Theorem}
\newtheorem{theorem}{Theorem}[section]
\newtheorem{lemma}[theorem]{Lemma}
\newtheorem{proposition}[theorem]{Proposition}
\theoremstyle{definition}
\newtheorem{definition}[theorem]{Definition}
\newtheorem{example}[theorem]{Example}
\theoremstyle{remark}
\newtheorem{remark}[theorem]{Remark}
\numberwithin{equation}{section}
\newcommand{\field}[1]{\mathbb{#1}}
\newcommand{\R}{\field{R}}
\providecommand{\abs}[1]{\lvert#1\rvert}
\providecommand{\norm}[1]{\lVert#1\rVert}
\def\acknowledgement{\par\addvspace{17pt}\small\rmfamily
\trivlist\if!\ackname!\item[]\else
\item[\hskip\labelsep
{\bfseries\ackname}]\fi}
\newenvironment{acknowledgements}{\begin{acknowledgement}}
{\end{acknowledgement}}
\def\ackname{Acknowledgements}
\begin{document}

\title[Exact multiplicity results for a singularly perturbed Neumann
problem]
{Exact multiplicity results for a singularly perturbed Neumann
problem}


\author{Massimo Grossi}
\address{Dipartimento di Matematica, Universit\`a di Roma ``La Sapienza",
P.le A. Moro 2 - 00185 Roma}
\email{\sf grossi@mat.uniroma1.it}


\author{S\'ergio L. N. Neves}
\address{Departamento de Matem‡tica, Universidade Estadual de Campinas, IMECC, Rua SŽrgio Buarque de Holanda 651, Campinas, SP, Brasil}
\email{sergio184@gmail.com}






\begin{abstract}
In this paper we study the number of the boundary single peak
solutions of the problem
\begin{align*}
\begin{cases}
  -\varepsilon^2 \Delta u + u = u^p,  &\text{ in }\Omega \\
       u > 0,                      &\text{ in }\Omega\\
       \frac{\partial u}{\partial \nu} = 0,& \text{ on }\partial \Omega
\end{cases}
\end{align*}
for $\varepsilon$ small and $p$ subcritical. \\
Under some suitable assumptions on the shape of the boundary near
a critical point of the mean curvature, we are able to prove exact multiplicity results.
Note that the degeneracy of the critical point is allowed.
\end{abstract}

\maketitle

\section{Introduction and main results}

In this paper we are concerned with the following problem
\begin{align}
\label{eq1}
\begin{cases}
  -\varepsilon^2 \Delta u + u = u^p,  &\text{ in }\Omega \\
       u > 0,                      &\text{ in }\Omega\\
       \frac{\partial u}{\partial \nu} = 0,& \text{ on }\partial \Omega
\end{cases}
\end{align}
where $\Omega \subset \R^N$ ($N \geqslant 2$) is a bounded smooth
domain, $1<p<\frac{N+2}{N-2}$ if $N > 2$  otherwise $p>1$ and
$\nu$ denotes the outer unit normal at $\partial \Omega$.

This problem was first studied in the papers \cite{LNT},
\cite{NT1}, \cite{NT2}, where the authors analyzed the asymptotic
behavior of the least energy solution of the functional naturally
associated. Among other results, they proved that for
$\varepsilon$ small enough the least energy solution to
\eqref{eq1} has exactly one local maximum and concentrates at a
point $P$ which achieves the maximum of the mean curvature of the
boundary of $\Omega$. This solution is usually called {\it single
peak solution}. In this paper we use the following definition,
\begin{definition} \label{def1}
A family of solutions $u_{\varepsilon}$ of \eqref{eq1} is called
boundary single peak if
\begin{align*}
&\varepsilon^{-N}\int_{\Omega}\left( \varepsilon^2 \abs{\nabla
u_{\varepsilon}}^2
+ u_{\varepsilon}^2 \right)dx \leqslant C \qquad \forall \,\, \varepsilon >0, \\
&u_{\varepsilon} \,\, \text{has exactly one local maximum point} \,\, P_{\varepsilon} \,\,
\text{which belongs to } \partial\Omega.
\end{align*}
Finally we say that $u_{\varepsilon}$ concentrates at $P_0$ if
$P_{\varepsilon} \to P_0$ as $\varepsilon \to 0$.
\end{definition}

After the work of Ni and Takagi there were efforts to obtain
solutions concentrating at other critical points of the mean
curvature. Some results in this direction can be found in \cite{W}
for non degenerate critical points, and in \cite{DPFW}, \cite{Gui}
and \cite{YYL} for possibly degenerate critical points.
There is an extensive literature on this subject and it would be
impossible to provide a complete list of references. For the
interested reader we refer to \cite{AM,M} and the references
therein.

In this paper we want to estimate the number of single peak
solutions of the problem \eqref{eq1} which concentrate at a given
point $P \in \partial \Omega$, without assuming that $P$ is a non
degenerate critical point of the mean curvature of $\partial
\Omega$.\\
Actually, for the non degenerate case, we have the following
result,
\begin{theorem*}[\cite{W2}]
Let us consider a non degenerate critical point $P$ of the mean
curvature. Then there exists a unique single peak solution which
concentrates at $P$.
\end{theorem*}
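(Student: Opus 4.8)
The natural approach is a finite-dimensional (Lyapunov--Schmidt) reduction, in which the nondegeneracy of the limiting profile yields existence and the nondegeneracy of $P$ yields uniqueness. \textbf{Step 1 (the reduction).} Let $w$ be the unique positive radial solution of $-\Delta w+w=w^{p}$ in $\R^{N}$; it decays exponentially and is nondegenerate, i.e. the kernel of the linearized operator $L\phi:=-\Delta\phi+\phi-pw^{p-1}\phi$ on $H^{1}(\R^{N})$ is exactly $\operatorname{span}\{\partial_{1}w,\dots,\partial_{N}w\}$. For $\xi$ near $P$ on $\partial\Omega$, let $PU_{\varepsilon,\xi}$ be the projection onto $H^{1}(\Omega)$, via the Neumann problem for $-\varepsilon^{2}\Delta+1$, of the rescaled bubble $x\mapsto w((x-\xi)/\varepsilon)$. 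Writing a solution of \eqref{eq1} as $u=PU_{\varepsilon,\xi}+\phi$ with $\phi$ $L^{2}$-orthogonal to the $(N-1)$-dimensional approximate kernel spanned by the tangential derivatives $\partial PU_{\varepsilon,\xi}/\partial\xi_{j}$, the invertibility of $L$ on the orthogonal complement allows one to solve, by a contraction argument, the infinite-dimensional part of the equation: for all small $\varepsilon$ and all $\xi$ in a fixed neighbourhood of $P$ there is a unique small $\phi=\phi_{\varepsilon,\xi}$, depending $C^{1}$ on $\xi$, such that $PU_{\varepsilon,\xi}+\phi_{\varepsilon,\xi}$ solves \eqref{eq1} modulo a Lagrange multiplier lying in the approximate kernel.

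\textbf{Step 2 (existence).} Let $J_{\varepsilon}$ be the energy functional associated with \eqref{eq1} and set $\Phi_{\varepsilon}(\xi):=J_{\varepsilon}(PU_{\varepsilon,\xi}+\phi_{\varepsilon,\xi})$. One obtains, $C^{1}$-uniformly in $\xi$, the expansion
\[
\Phi_{\varepsilon}(\xi)=\varepsilon^{N}\bigl(a_{0}-a_{1}\varepsilon\,H(\xi)+o(\varepsilon)\bigr),\qquad a_{1}>0,
\]
where $H$ is the mean curvature of $\partial\Omega$. By the standard variational principle of Lyapunov--Schmidt theory, critical points of $\Phi_{\varepsilon}$ produce genuine solutions of \eqref{eq1}, the Lagrange multiplier vanishing. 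Since $P$ is a nondegenerate critical point of $H$, the $C^{1}$-closeness above forces $\Phi_{\varepsilon}$, for $\varepsilon$ small, to possess a unique critical point $\xi_{\varepsilon}$ in a fixed neighbourhood of $P$, with $\xi_{\varepsilon}\to P$; the corresponding $u_{\varepsilon}=PU_{\varepsilon,\xi_{\varepsilon}}+\phi_{\varepsilon,\xi_{\varepsilon}}$ is a boundary single peak solution concentrating at $P$.

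\textbf{Step 3 (uniqueness).} Let $u_{\varepsilon}$ be an arbitrary boundary single peak solution concentrating at $P$, with local maximum point $P_{\varepsilon}\in\partial\Omega$, $P_{\varepsilon}\to P$. Using the energy bound in Definition~\ref{def1}, a blow-up analysis about $P_{\varepsilon}$ together with the exponential decay of $w$ and elliptic estimates shows that the $\varepsilon$-rescaling of $u_{\varepsilon}$ converges to $w$ in $C^{1}_{\mathrm{loc}}$, and more precisely that there is $\xi_{\varepsilon}\in\partial\Omega$, $\xi_{\varepsilon}\to P$, for which $u_{\varepsilon}-PU_{\varepsilon,\xi_{\varepsilon}}$ is small in the $\varepsilon$-scaled norm and orthogonal to the approximate kernel. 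The uniqueness statement of Step~1 then identifies $u_{\varepsilon}=PU_{\varepsilon,\xi_{\varepsilon}}+\phi_{\varepsilon,\xi_{\varepsilon}}$ and makes $\xi_{\varepsilon}$ a critical point of $\Phi_{\varepsilon}$; a Pohozaev-type identity, obtained by testing \eqref{eq1} against tangential vector fields along $\partial\Omega$, gives $\nabla_{\partial\Omega}H(\xi_{\varepsilon})=o(1)$, so $\xi_{\varepsilon}$ lies near $P$. Since $\Phi_{\varepsilon}$ has, by Step~2, a unique critical point near $P$, every single peak solution concentrating at $P$ coincides with the one constructed in Step~2.

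\textbf{Main obstacle.} The delicate point is Step~3: from an arbitrary single peak family we know only the uniform energy bound and the existence of a single maximum lying on $\partial\Omega$, and we must upgrade this to the sharp statement that $u_{\varepsilon}$ lies in the Lyapunov--Schmidt slice with enough accuracy to invoke the uniqueness of $\phi_{\varepsilon,\xi}$ and of the critical point of $\Phi_{\varepsilon}$. This requires careful a priori estimates — exponential decay away from the peak, $C^{1}$ control of the rescaled solution, precise location of $P_{\varepsilon}$ relative to $\partial\Omega$ — and the Pohozaev identity to tie the concentration point to the local geometry of $\partial\Omega$; it is precisely here that the nondegeneracy of $P$ enters, making the correspondence between single peak solutions and critical points of $\Phi_{\varepsilon}$ bijective in a neighbourhood of $P$.
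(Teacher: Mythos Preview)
The paper does not prove this theorem; it is quoted from Wei \cite{W2} as background for the degenerate case treated in the rest of the paper, and no argument for it appears anywhere in the text. There is therefore no ``paper's own proof'' to compare your proposal against.

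That said, your outline is a faithful sketch of the standard route to this result, and it is worth noting how it relates to what the paper does for its own (degenerate) theorem. Your Step~3 argues uniqueness by showing that \emph{every} single peak solution can be written in the Lyapunov--Schmidt form $PU_{\varepsilon,\xi}+\phi_{\varepsilon,\xi}$ and hence corresponds to the unique critical point of $\Phi_{\varepsilon}$. The paper, in Section~\ref{exact}, takes a different tack for the degenerate case: it assumes two distinct single peak solutions $u_{1,n},u_{2,n}$ concentrating at the same limiting parameter, forms the normalized difference $\phi_n=(v_{1,n}-v_{2,n})/\|v_{1,n}-v_{2,n}\|_{L^{\infty}}$, passes to the limit to obtain $\phi=\sum a_i\,\partial_{y_i}U_{\xi'}$, and then uses a Pohozaev identity together with the nondegeneracy condition $\det\mathrm{Jac}\,\mathcal{L}\neq0$ to force all $a_i=0$, contradicting $\|\phi_n\|_{L^{\infty}}=1$. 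This ``difference quotient'' argument avoids the delicate step you flag as the main obstacle (placing an arbitrary single peak solution exactly into the reduction slice), at the price of requiring the sharp rate $|P_\varepsilon|=O(\varepsilon)$ from Proposition~\ref{prop11}. Either approach works in the nondegenerate setting; the paper's method is somewhat more robust for degenerate critical points, which is why it is chosen there.
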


Therefore the question of number of single peak solutions
concentrating at the same point becomes interesting when we have a
degenerate critical point of the mean curvature.

This study reveals new phenomena which does not appear in the non
degenerate case. For example, we see that in general there is no
uniqueness of the solution. In fact, in Section 6, we give an
example where there exist two single peak solutions concentrating
at the same point.\\
In order to state the main results we need to require some
assumptions on the boundary of the domain.

\section*{Assumptions on the domain}

{\it Without loss of generality we may assume that $P=0$ is the
origin, that $x_N=0$ is the tangent plane of $\partial \Omega$ at
$0$ and $\nu(0)=(0,\dots,0, -1)$. We make the following
assumptions on the shape of $\partial \Omega$ around $0$.

There exists $r_0 > 0$ such that, in a neighborhood of $0$, $\partial \Omega$ is the graph of a function $\psi(x')$, $x' \in \R^{N-1}$ with the following properties:
\begin{equation}
\label{eq2}
\psi(x') = Q(x') + R(x'), \,\,\,\,\,\,\,\,\,\,\,\, \forall \,\,\,\, \abs{x'} <r_0
\end{equation}
where $Q$ is a smooth function defined in all $\R^{N-1}$ which satisfies, for some real number $\alpha \geqslant 3$
\begin{equation}
\label{eq3}
Q(t x')= t^{\alpha +1} Q(x'), \,\,\,\,\,\,\,\,\,\,\ \forall \,\,\, t>0 ,\,\,\,\,\forall \,\,\, x' \in \R^{N-1}
\end{equation}
and $R$ is a smooth function satisfying, for some $\beta > \alpha$ and $C > 0$
\begin{equation}
\abs{D^{\mathbf k}R(x')} \leqslant C \abs{x'}^{\beta +1 - \abs{\mathbf k}}, \,\,\,\,\,\,\,\,\, \forall \,\,\,\,\abs{x'} \leqslant r_0  \label{eq4}
\end{equation}
\noindent for all multi-index $\mathbf k = (k_1,\dots,k_N)$ with
$\abs{\mathbf k} \leqslant 4$ where $\abs{\mathbf k}=k_1 + \cdots
+ k_N$ and $D^{\mathbf k}R = \frac{\partial^{\abs{\mathbf k}}R}
{\partial x_1^{k_1}\cdots\partial x_N^{k_N}}$.}

In some cases we also consider the following condition
\begin{equation}
\label{eq97}
\nabla \Delta Q (x')\neq 0 \,\,\,\,\,\,\,\, \forall \,\,\, x' \in S^{N-2}.
\end{equation}

Note that some crucial computations in the proof of our main
results use the above parametrization of the boundary of $\Omega$.
In fact, unlike other papers in this subject where the mean
curvature plays a crucial role, here the leading term of some
important expansions involves the derivatives of the function $Q$.

Let us introduce the following vector field $\mathcal{L} :
\R^{N-1} \rightarrow \R^{N-1}$ with components
\begin{equation}
\label{eq49}
\mathcal{L}(\xi')= \left( \,\, \int_{\R^{N-1}}\left(\frac{1}{2}\abs{\nabla U}^2 + \frac{1}{2}U^2 - \frac{1}{p+1}U^{p+1} \right)(y',0)\frac{\partial Q}{\partial y_i}(y' + \xi') d y' \right)_{i=1,\dots,N-1}
\end{equation}
where $Q$ is defined in \eqref{eq2} and $U \in H^1(\R^N)$ is the
unique solution of
\begin{align}
\label{eq8}
\begin{cases}
  - \Delta U + U = U^p &\hbox{ in }\R^N \\
  U > 0&\hbox{ in }\R^N\\
  U(0)= \max\limits_{x \in \R^N} \left\{ U(x)\right\},
\end{cases}
\end{align}
which is well known to be a radial function which decays
exponentially together with its derivatives up to third order (see
\cite{BL} and \cite{K}).
Note that the integrand in \eqref{eq49} is in $L^1(\R^{N-1})$ by
the exponential decay of $U$ and $\abs{\nabla U}$. Set
\begin{equation*}
\Xi = \left\{\xi' \in \R^{N-1} \,\,\,\,\text{such that}\,\,
\xi'\,\, \text{is a stable zero of}\,\, \mathcal{L} \right\},
\end{equation*}
the definition of stable zero that we use here is the same as in \cite{G}. We recall it 
here for the reader's convenience.

\begin{definition} \label{def2}
Let $\mathcal{L} \in C(\R^{N-1},\R^{N-1})$ be a vector field. We say that $\xi_0$ is a stable zero for 
$\mathcal{L}$ if
\begin{itemize}
	\item[(i)] $\mathcal{L}(\xi_0) = 0$,
	\item[(ii)] $\xi_0$ is isolated,
	\item[(iii)] If $\mathcal{L}_n$ is a sequence of vector fields such that $\norm{\mathcal{L} - \mathcal{L}_n}_{C(B_r(\xi_0))} \to 0$, 
	for some $r>0$, then there exists $\xi_n \in B_r(\xi_0)$ such that $\mathcal{L}_n(\xi_n) = 0$ and $\xi_n \to \xi_0$.  
\end{itemize}
\end{definition}

We are now able to state our main result.

\begin{theorem} \label{mainteo}
Let $\Omega$ be a bounded domain satisfying the conditions \eqref{eq2} - \eqref{eq4}, and suppose that $\# \Xi < \infty $. Then there exists $\varepsilon_0 > 0$ such that for any $0 < \varepsilon < \varepsilon_0$
\begin{equation}
\label{eq150}
\# \left\{\text{Single peak solutions of \eqref{eq1} concentrating at} \,\, 0 \right\} \geqslant \# \Xi.
\end{equation}

If we suppose, in addition, that the domain satisfies \eqref{eq97} and that for any $\xi' \in \R^{N-1}$ such that $\mathcal{L}(\xi')=0$ it holds
\begin{equation}
\label{eq174}
\text{det Jac } \mathcal{L}(\xi') \neq 0
\end{equation}
then there exists $\varepsilon_0 > 0$ such that for any $0< \varepsilon < \varepsilon_0$ we have
\begin{equation}
\label{eq175}
\# \left\{\text{Single peak solutions of \eqref{eq1} concentrating at} \,\, 0 \right\} = \# \Xi
\end{equation}
\end{theorem}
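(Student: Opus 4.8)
The strategy is the standard Lyapunov–Schmidt reduction adapted to this singularly perturbed Neumann problem, with the key twist that the reduced functional is governed by the vector field $\mathcal{L}$ in \eqref{eq49} rather than by the mean curvature. Let me sketch the main steps.

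**Step 1 (Setup and ansatz).** Rescale the problem on the set $\Omega_\varepsilon = \varepsilon^{-1}\Omega$ and straighten the boundary near $0$ using the graph function $\psi = Q + R$ from \eqref{eq2}. For a boundary point $P = \varepsilon\xi$ near $0$ (with $\xi = (\xi', \psi_\varepsilon(\xi'))$ lying on the rescaled boundary, $\psi_\varepsilon(x') := \varepsilon^{-1}\psi(\varepsilon x')$), define an approximate solution $PU_{\varepsilon,\xi}$ by projecting the translated bubble $U(\cdot - \xi)$ onto the space satisfying the Neumann condition on $\partial\Omega_\varepsilon$. The exponential decay of $U$ and its derivatives (guaranteed by \cite{BL}, \cite{K}) makes all error terms manageable.

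**Step 2 (Finite-dimensional reduction).** Linearizing around $PU_{\varepsilon,\xi}$, the kernel of the linearized operator on $\R^N$ (or the half-space) is spanned by the $N-1$ tangential derivatives $\partial_{y_i} U$, $i=1,\dots,N-1$ (the normal derivative drops out because of the Neumann condition and radial symmetry). Standard arguments give, for each small $\varepsilon$ and each $\xi'$ in a fixed neighborhood of $0$, a unique small $\phi_{\varepsilon,\xi'} \perp \{\partial_{y_i}U\}$ such that $u_{\varepsilon,\xi'} := PU_{\varepsilon,\xi} + \phi_{\varepsilon,\xi'}$ solves the equation up to a Lagrange multiplier in $\mathrm{span}\{\partial_{y_i}U\}$. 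Then $u_{\varepsilon,\xi'}$ is a genuine solution of \eqref{eq1} concentrating at $0$ precisely when $\xi'$ is a critical point of the reduced energy $\mathcal{F}_\varepsilon(\xi') := J_\varepsilon(u_{\varepsilon,\xi'})$, equivalently a zero of $\nabla \mathcal{F}_\varepsilon$.

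**Step 3 (Expansion of the reduced functional).** This is the heart of the matter. One must show that, after an appropriate normalization (dividing by a power of $\varepsilon$ reflecting the order $\alpha$ of vanishing of $Q$),
\[
\nabla \mathcal{F}_\varepsilon(\xi') = c\,\varepsilon^{?}\,\mathcal{L}(\xi') + o(\varepsilon^{?}) \quad \text{in } C^0_{loc},
\]
where the leading term comes from plugging the graph $\psi_\varepsilon = \varepsilon^{\alpha}Q + (\text{lower order from } R)$ into the energy and using the homogeneity \eqref{eq3}: the dominant contribution to the $\xi'$-dependence of the energy is exactly the convolution-type integral $\int_{\R^{N-1}}(\text{energy density of }U)(y',0)\,Q(y'+\xi')\,dy'$, whose gradient is $\mathcal{L}(\xi')$. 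The contribution of $R$ is of strictly higher order by \eqref{eq4} since $\beta > \alpha$, and the contribution of the correction $\phi$ is quadratically small. Getting this expansion in $C^1_{loc}$ (needed for the second part) requires differentiating the whole construction in $\xi'$, which is the most technical part.

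**Step 4 (Counting solutions).** For the lower bound \eqref{eq150}: since $\nabla\mathcal{F}_\varepsilon \to c\,\mathcal{L}$ locally uniformly and $\#\Xi < \infty$, around each stable zero $\xi_0 \in \Xi$ the stability property (iii) of Definition \ref{def2} produces a zero $\xi_\varepsilon \to \xi_0$ of $\nabla\mathcal{F}_\varepsilon$, hence a solution; distinct stable zeros give distinct (because isolated, property (ii)) solutions for $\varepsilon$ small. This yields at least $\#\Xi$ solutions. For the upper bound and equality \eqref{eq175}: assuming nondegeneracy \eqref{eq174}, every zero of $\mathcal{L}$ is automatically a stable zero, and conversely one must show that every single peak solution concentrating at $0$ arises from the reduction. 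This last point — that the a priori boundary single peak solutions of \eqref{eq1} are captured by the ansatz $PU_{\varepsilon,\xi} + \phi$ — requires a separate asymptotic analysis: from Definition \ref{def1} and the Ni–Takagi-type estimates one shows such a solution is, after rescaling, $C^1$-close to a translated standard bubble $U(\cdot-\xi_\varepsilon)$ with $\varepsilon\xi_\varepsilon \to 0$, and then a local uniqueness statement for the reduced equation (using \eqref{eq174} via the inverse function theorem, and \eqref{eq97} to rule out concentration "at the wrong scale" or degeneracy of the profile) forces it to coincide with one of the $u_{\varepsilon,\xi'}$. Counting then gives exactly $\#\Xi$ solutions.

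**Main obstacle.** The delicate point is Step 3 together with the identification in Step 4: one needs the expansion of $\nabla\mathcal{F}_\varepsilon$ to be sharp enough (in $C^1_{loc}$, with a genuinely controlled remainder) that the leading term $\mathcal{L}$ dictates the exact number of critical points, and one needs to prove that no "exotic" single peak solution escapes the finite-dimensional reduction — this is where assumption \eqref{eq97} on $\nabla\Delta Q$ enters, presumably to control the next-order term in the energy expansion and pin down the concentration rate. Verifying that the $R$-contribution and the $\phi$-contribution are of strictly lower order than the $Q$-term, uniformly in $\xi'$, is the bulk of the computational work.
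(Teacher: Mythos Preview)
Your plan for the lower bound \eqref{eq150} matches the paper's Section~\ref{estimate}: Lyapunov--Schmidt reduction, then the expansion \eqref{eq57} showing $\nabla\Phi_\varepsilon = \varepsilon^\alpha[\mathcal{L} + o(1)]$ in $C^0_{loc}$, then stability of the zeros of $\mathcal{L}$.

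For the upper bound \eqref{eq175}, however, your route diverges from the paper. You propose to (a) show that every single peak solution is captured by the ansatz, and then (b) apply the inverse function theorem to the reduced map, which you note requires a $C^1_{loc}$ expansion of $\varepsilon^{-\alpha}\nabla\Phi_\varepsilon$. The paper does \emph{not} do this. Instead it argues by contradiction directly on the PDE: given two distinct single peak solutions $v_{1,n}$, $v_{2,n}$ whose rescaled peaks both converge to the same $\xi'\in\Xi$ (this uses Propositions~\ref{prop11}--\ref{prop12}, where \eqref{eq97} enters exactly as you guessed, to force $|P_\varepsilon|=O(\varepsilon)$), it forms the normalized difference $\phi_n=(v_{1,n}-v_{2,n})/\|v_{1,n}-v_{2,n}\|_{L^\infty}$, passes to a limit $\phi=\sum_i a_i\,\partial_{y_i}U_{\xi'}$ solving the linearized equation on $\R^N_+$, and then subtracts the Pohozaev identities \eqref{eq101} for $v_{1,n}$ and $v_{2,n}$ and passes to the limit to obtain the linear system \eqref{eq88}. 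The coefficient matrix of that system is essentially $\mathrm{Jac}\,\mathcal{L}(\xi')$, so \eqref{eq174} forces $a_i=0$, hence $\phi\equiv 0$, and a maximum-point argument on $\phi_n$ gives the contradiction.

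The payoff of the paper's approach is that it bypasses entirely the $C^1$ expansion you flag as ``the most technical part'': only $C^0$ information on the reduced map is ever used, and local uniqueness comes from Pohozaev identities applied to the genuine solutions rather than from differentiating the reduction. Your approach is plausible in principle, but it would require second-derivative control on $\Phi_\varepsilon$ (and hence on $w$), which is not established in the paper and may be delicate for $1<p<2$ in view of the exponent $\gamma=\min\{1,p-1\}$ in \eqref{eq48}; it would also require an explicit ``capture'' lemma placing an arbitrary single peak solution inside the image of the reduction. The paper's difference-quotient argument avoids both issues.
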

Note that \eqref{eq175} applies also if $\Xi=\emptyset$. In this
case we have that there is no
solution concentrating at $0$ (see Proposition \ref{teo14}).\\
The proof of Theorem \ref{mainteo} relies on the classical
Lyapunov-Schmidt reduction, a tool widely used in this kind of
problems.

The paper is organized as follows: in Section \ref{s2} we
introduce the variational setting and prove some important
estimates. In Section \ref{s3} we perform the Lyapunov-Schmidt
reduction and in Section \ref{estimate} we prove \eqref{eq150} of
Theorem \ref{mainteo}. In Section \ref{s5} we recall some useful
properties of single peak solutions and prove a crucial estimate
on the rate of $P_{\varepsilon}-P_0$. In Section \ref{exact} we
give the proof of \eqref{eq175}. In Section \ref{s7} we
provide some examples and applications of Theorem \ref{mainteo}. 
Finally, we conclude with an Appendix with some technical details.

\section*{Notation}

\begin{itemize}
\item[] $x\cdot y$ denotes the scalar product of $x,y \in \R^N$;

\item[] $B_{\rho}(y) = \{ x \in \R^N \, : \, \abs{x-y} < \rho \}$;

\item[] $S^N = \{ x \in \R^{N+1} \, : \, \abs{x}=1\}$;

\item[] $Id$ denotes the identity operator;

\item[] $\nabla F$ denotes the gradient of the functional $F$;

\item[] $D^m F(x_0)$ the m-th derivative of $F$ at $x_0$;

\item[] $\# S$ denotes the cardinality of the set $S$;

\item[] $u_+ = \max\{0,u\}$;

\item[] $C$, $c_0$, $\tau_0$ denote various constants independent of $\varepsilon$;

\item[] $h=O(f(\varepsilon))$ means that $h\left(f(\varepsilon)\right)^{-1} \leqslant C$ as $\varepsilon \to 0$, where $C$ is independent of $\varepsilon$;

\item[] $h=o(f(\varepsilon))$ means that $h\left(f(\varepsilon)\right)^{-1} \to 0$ as $\varepsilon \to 0$;

\item[] $H^1(\mathcal{D}) = W^{1,2}(\mathcal{D})$, for a domain $\mathcal{D} \subseteq \R^N$ is the standard Sobolev space;

\item[] $\left( u | v \right)_{H^1(\mathcal{D})}$ denotes the inner product of $u,v
\in H^1(\mathcal{D})$.
\end{itemize}

\section{The Variational Perturbative Setting}\label{s2}
First we make a change of variables to transform the problem in $\Omega$ into a problem in $\Omega_{\varepsilon}= \frac{1}{\varepsilon} \Omega$.
\begin{align}
\label{eq6}
\begin{cases}
  - \Delta u + u = u^p, & \Omega_{\varepsilon} \\
  u>0 , & \Omega_{\varepsilon} \\
  \frac{\partial u}{\partial \nu} = 0,&\partial \Omega_{\varepsilon}
\end{cases}
\end{align}
so that if $u_{\varepsilon}(x)$ is a solution of \eqref{eq6} then $u_{\varepsilon}(\frac{x}{\varepsilon})$ is a solution of \eqref{eq1}.

Solutions of \eqref{eq6} are critical points of the functional $I_{\varepsilon} \in C^2(H^1(\Omega_{\varepsilon}), \R)$
\begin{equation}
\label{eq7}
I_{\varepsilon}(u) = \frac{1}{2}\int_{\Omega_{\varepsilon}}{(\abs{\nabla u}^2 + u^2)} - \frac{1}{p+1} \int_{\Omega_{\varepsilon}}{u_+^{p+1}}.
\end{equation}

Next we construct a manifold $\mathcal{Z}^{\varepsilon} \subseteq H^1(\Omega_{\varepsilon})$, of class
$C^2$, consisting of pseudo-critical
points for $I_{\varepsilon}$ in the sense that
$\norm{I_{\varepsilon}'(z)}$ is small for all $z \in
\mathcal{Z}^{\varepsilon}$.

Let $U \in H^1(\R^N)$ be the  solution of \eqref{eq8} and let
$\mathcal{Z}^{\varepsilon}$ be defined by
\begin{equation}
\label{eq9}
\mathcal{Z^{\varepsilon}} = \left\{U_{\xi}=U(\cdot - \xi) \, | \, \xi \in \partial \Omega_{\varepsilon} \right\}.
\end{equation}
The tangent space to $\mathcal{Z}^{\varepsilon}$ at $U_{\xi}$ is denoted by $T_{U_{\xi}}\mathcal{Z^{\varepsilon}}$ and can be written as
\begin{equation}
\label{eq10}
T_{U_{\xi}}\mathcal{Z^{\varepsilon}} = \text{Span}_{H^1(\Omega_{\varepsilon})} \left\{\partial_{\xi_1}U_{\xi}, \dots, \partial_{\xi_{N-1}}U_{\xi}  \right\}.
\end{equation}
Hereafter $\partial_{\xi_i}$ denotes $\frac{\partial}{\partial
e_i}$ and $\left\{e_1, \dots, e_{N-1} \right\}$ are $N-1$ linearly
independent tangent vectors to $\partial \Omega_{\varepsilon}$ at
$\xi$.
\begin{lemma}
\label{le3}
Given $R > 0$ there exists $\varepsilon_0 >0$ and $C > 0$ such that for all ${0 < \varepsilon < \varepsilon_0}$ and for all $\xi \in \partial \Omega_{\varepsilon}$, $\xi = (\xi', \frac{1}{\varepsilon}\psi(\varepsilon \xi'))$ with $\abs{\xi'} \leqslant R$ we have
\begin{align}
\norm{I_{\varepsilon}'(U_{\xi})} &\leqslant C \varepsilon^{\alpha} \label{eq11}\\
\norm{I_{\varepsilon}''(U_{\xi})[q]} &\leqslant C \varepsilon^{\alpha} \norm{q} \label{eq12}
\end{align}
for every $q \in T_{U_{\xi}}\mathcal{Z^{\varepsilon}}$, where
$\alpha$ is the same number appearing in \eqref{eq3}.
\end{lemma}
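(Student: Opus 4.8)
The plan is to exploit that $U$ solves the limit equation on all of $\R^N$, which reduces both estimates to the control of a boundary integral on $\partial\Omega_\varepsilon$. First I would observe that, for $v\in H^1(\Omega_\varepsilon)$, an integration by parts together with $-\Delta U_\xi+U_\xi=U_\xi^{p}$ in $\R^N$ gives
\begin{equation*}
\bigl(I'_\varepsilon(U_\xi)\,|\,v\bigr)_{H^1(\Omega_\varepsilon)}=\int_{\Omega_\varepsilon}\bigl(\nabla U_\xi\cdot\nabla v+U_\xi v\bigr)-\int_{\Omega_\varepsilon}U_\xi^{p}v=\int_{\partial\Omega_\varepsilon}\frac{\partial U_\xi}{\partial\nu}\,v\,d\sigma ,
\end{equation*}
so that $\norm{I'_\varepsilon(U_\xi)}=\sup_{\norm{v}\leqslant1}\int_{\partial\Omega_\varepsilon}\frac{\partial U_\xi}{\partial\nu}v\,d\sigma$. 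Likewise, each $\partial_{\xi_i}U_\xi$ equals $\nabla U(\cdot-\xi)\cdot e_i$ up to a sign and solves the linearized equation $-\Delta w+w=pU_\xi^{p-1}w$ in $\R^N$ (differentiate the equation for $U_\xi$ in the direction $e_i$), so the same computation yields $\bigl(I''_\varepsilon(U_\xi)[q]\,|\,v\bigr)_{H^1(\Omega_\varepsilon)}=\int_{\partial\Omega_\varepsilon}\frac{\partial q}{\partial\nu}\,v\,d\sigma$ for every $q\in T_{U_\xi}\mathcal Z^\varepsilon$. Combining these with the trace inequality $\norm{v}_{L^2(\partial\Omega_\varepsilon)}\leqslant C\norm{v}_{H^1(\Omega_\varepsilon)}$, valid with $C$ independent of $\varepsilon\leqslant1$ (apply $\int_{\partial\Omega}w^{2}(X\cdot\nu)=\int_{\Omega}\mathrm{div}(w^{2}X)$ for a fixed vector field $X$ with $X\cdot\nu\geqslant1$ on $\partial\Omega$, then rescale $\Omega_\varepsilon=\tfrac1\varepsilon\Omega$), reduces the lemma to
\begin{equation*}
\norm{\partial U_\xi/\partial\nu}_{L^2(\partial\Omega_\varepsilon)}\leqslant C\varepsilon^{\alpha}\qquad\text{and}\qquad\norm{\partial q/\partial\nu}_{L^2(\partial\Omega_\varepsilon)}\leqslant C\varepsilon^{\alpha}\norm{q}.
\end{equation*}

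Next I would use the key geometric fact that, by \eqref{eq2}--\eqref{eq4}, $\partial\Omega_\varepsilon$ is flat to high order near $\xi$ while $U$ is radial. Writing a boundary point as $z=(z',\tfrac1\varepsilon\psi(\varepsilon z'))$ and using the homogeneity \eqref{eq3}, one has $\tfrac1\varepsilon\psi(\varepsilon z')=\varepsilon^{\alpha}Q(z')+\tfrac1\varepsilon R(\varepsilon z')$, and the (unnormalized) outer normal at $z$ is $\bigl(\varepsilon^{\alpha}\nabla Q(z')+\nabla R(\varepsilon z'),\,-1\bigr)$, whence
\begin{equation*}
(z-\xi)\cdot\nu(z)=\frac{\varepsilon^{\alpha}\bigl[(z'-\xi')\cdot\nabla Q(z')-Q(z')+Q(\xi')\bigr]+\mathcal R_\varepsilon(z')}{\bigl(1+\abs{\varepsilon^{\alpha}\nabla Q(z')+\nabla R(\varepsilon z')}^{2}\bigr)^{1/2}} ,
\end{equation*}
$\mathcal R_\varepsilon$ denoting the terms that contain $R$. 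On the region $\abs{z-\xi}\leqslant\delta/\varepsilon$, with $\delta<r_0$ fixed so that $\abs{\varepsilon z'}\leqslant\varepsilon R+\delta<r_0$ and the parametrization is available, both the value of the bracket and its gradient — the latter being $D^{2}Q(z')(z'-\xi')$ — vanish at $z'=\xi'$, so the bracket is $O\bigl(\abs{z-\xi}^{2}(1+\abs{z-\xi})^{\alpha-1}\bigr)$, while \eqref{eq4} gives $\mathcal R_\varepsilon(z')=O\bigl(\varepsilon^{\beta}\abs{z-\xi}^{2}(1+\abs{z-\xi})^{\beta-1}\bigr)$, which is of strictly lower order since $\beta>\alpha$. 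Using $\tfrac{\partial U_\xi}{\partial\nu}(z)=\tfrac{U'(\abs{z-\xi})}{\abs{z-\xi}}\,(z-\xi)\cdot\nu(z)$ and the exponential decay $\bigl|\tfrac{U'(r)}{r}\bigr|\leqslant Ce^{-\gamma r}$ (for some $\gamma>0$), the polynomial factors are absorbed into the exponential and I obtain $\bigl|\tfrac{\partial U_\xi}{\partial\nu}(z)\bigr|\leqslant C\varepsilon^{\alpha}e^{-\gamma\abs{z-\xi}/2}$ there; on the complementary region $\bigl|\tfrac{\partial U_\xi}{\partial\nu}\bigr|\leqslant\abs{\nabla U(z-\xi)}\leqslant Ce^{-\gamma\delta/(2\varepsilon)}$ is exponentially small. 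Since $\sigma\bigl(\partial\Omega_\varepsilon\cap B_\rho(\xi)\bigr)\leqslant C\rho^{N-1}$, integrating over $\partial\Omega_\varepsilon$ gives the first estimate and hence \eqref{eq11}.

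For \eqref{eq12} I would write $q=\sum_i a_i\,\partial_{\xi_i}U_\xi$, so that $\tfrac{\partial q}{\partial\nu}(z)$ is a linear combination of the quantities $\bigl(D^{2}U(z-\xi)e_i\bigr)\cdot\nu(z)$; by the radial form of $D^{2}U$ each of these is a combination of $(z-\xi)\cdot\nu(z)$ and $e_i\cdot\nu(z)$ multiplied by scalar functions of $\abs{z-\xi}$ with exponential decay. The first factor was just estimated; for the second I would use that $e_i$ is tangent to $\partial\Omega_\varepsilon$ at $\xi$, so $e_i\cdot\nu(z)=e_i\cdot(\nu(z)-\nu(\xi))$, and that on $\abs{z-\xi}\leqslant\delta/\varepsilon$ the variation of $\nu$ along $\partial\Omega_\varepsilon$ is governed by $\varepsilon D^{2}\psi(\varepsilon z')=\varepsilon^{\alpha}D^{2}Q(z')+\varepsilon D^{2}R(\varepsilon z')$, which yields $\abs{e_i\cdot\nu(z)}\leqslant C\varepsilon^{\alpha}(1+\abs{z-\xi})^{\alpha}$ there (on the far region the exponential decay of $D^{2}U$ again suffices). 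Absorbing the polynomial factors as before, $\norm{\tfrac{\partial}{\partial\nu}\partial_{\xi_i}U_\xi}_{L^2(\partial\Omega_\varepsilon)}\leqslant C\varepsilon^{\alpha}$; and since $\norm{q}^{2}=\sum_{i,j}a_ia_j\,(\partial_{\xi_i}U_\xi\,|\,\partial_{\xi_j}U_\xi)_{H^1(\Omega_\varepsilon)}\geqslant c_0\abs{a}^{2}$ — the Gram matrix being a small perturbation of a positive multiple of the identity, by the radial symmetry of $U$ and the near-flatness of $\Omega_\varepsilon$ — I conclude $\norm{\tfrac{\partial q}{\partial\nu}}_{L^2(\partial\Omega_\varepsilon)}\leqslant C\varepsilon^{\alpha}\abs{a}\leqslant C\varepsilon^{\alpha}\norm{q}$.

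The main obstacle I anticipate is precisely the geometric bookkeeping in the middle step: extracting the sharp order $\varepsilon^{\alpha}$, and not merely $\varepsilon$, of the normal derivative of $U_\xi$ on $\partial\Omega_\varepsilon$. This forces one to use the homogeneity \eqref{eq3} of $Q$ rather than the mean curvature, to exploit the second-order vanishing in the Taylor expansion, to split carefully between the admissible region $\abs{z-\xi}\leqslant\delta/\varepsilon$ and its complement, and to absorb the polynomial growth of the derivatives of $Q$ on the unbounded admissible part of $\partial\Omega_\varepsilon$ into the exponential decay of $U$. Checking that the trace constant and the lower bound on the Gram matrix are uniform in $\varepsilon$ is a minor but necessary additional point.
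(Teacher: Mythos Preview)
Your proposal is correct and follows essentially the same approach as the paper: reduce both estimates to boundary integrals via integration by parts and the fact that $U_\xi$ (respectively its directional derivatives) solves the equation (respectively the linearized equation) on all of $\R^N$, then extract the factor $\varepsilon^\alpha$ from the flatness assumptions \eqref{eq2}--\eqref{eq4} while absorbing polynomial growth into the exponential decay of $U$. The only differences are in bookkeeping: the paper bounds $(z-\xi)\cdot\nu(z)$ by a direct Mean Value Theorem argument rather than your second-order Taylor expansion of the bracket (first-order vanishing already suffices since $U'(r)/r$ is bounded), and for \eqref{eq12} it first replaces $\partial_{\xi_i}U_\xi$ by $\partial_{y_i}U_\xi$ at cost $O(\varepsilon^\alpha)$ before estimating $\partial_\nu\partial_{y_i}U_\xi$, whereas you work directly with the tangent vectors $e_i$ and use $e_i\cdot\nu(z)=\partial_i\psi(\varepsilon z')-\partial_i\psi(\varepsilon\xi')$; both routes yield the same bound.
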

\begin{proof}
Before starting the proof, we remark that this kind of result can
be found in \cite[Proposition 18]{M}. The difference is that,
since our domain is ``flatter'' we obtain estimates of order
$\varepsilon^{\alpha}$. Moreover, for our purposes, it is
important that the constant $C$ is uniform for $\xi'$ in a fixed
ball.

First we prove \eqref{eq11}. Integrating by parts and using \eqref{eq8} we can write, for any $v \in H^1(\Omega_{\varepsilon})$,
\begin{equation*}
\left( I_{\varepsilon}'(U_{\xi})| v \right) = \int_{\partial
\Omega_{\varepsilon}}{\frac{\partial U_{\xi}}{\partial \nu} v
d\sigma}.
\end{equation*}

Now we divide $\partial \Omega_{\varepsilon}$ into two parts, $\partial \Omega_{\varepsilon} \cap B_{\frac{r_0}{\varepsilon}}(0)$ and $\partial \Omega_{\varepsilon} \backslash B_{\frac{r_0}{\varepsilon}}(0)$. In the latter set we use the exponential decay of $U_{\xi}$ and its derivatives, plus the trace inequality (with a constant independent of $\varepsilon$) to obtain
\begin{equation}
\label{eq13}
\left|\,\,\,\int_{\partial \Omega_{\varepsilon} \backslash B_{\frac{r_0}{\varepsilon}}(0)}{\frac{\partial U_{\xi}}{\partial \nu} v d \sigma}\right| \leqslant C \text{e}^{- \frac{c_0}{\varepsilon}} \norm{v}_{H^1(\Omega_{\varepsilon})}
\end{equation}
for some $c_0 > 0$.
In $ B_{\frac{r_0}{\varepsilon}}(0)$ we use the function $\psi$ to parametrize
\begin{equation*}
\partial \Omega_{\varepsilon} \cap  B_{\frac{r_0}{\varepsilon}}(0) \subseteq \left\{\left(y', \frac{1}{\varepsilon}\psi(\varepsilon y')\right) \, \Big| \, \abs{y'} < \frac{r_0}{\varepsilon} \right\}
\end{equation*}
 and the fact that $\nabla U(x) = U'(\abs{x})\frac{x}{\abs{x}}$ to write
\begin{equation*}
\frac{\partial U_{\xi}}{\partial \nu} (y) = \frac{U'(\abs{y - \xi})}{\abs{y-\xi}} (y - \xi) \cdot \nu(y), \,\,\,\,\,\,\,y=(y', \frac{1}{\varepsilon}\psi(\varepsilon y'))
\end{equation*}
where
\begin{equation*}
\nu(y) = \frac{(\nabla \psi (\varepsilon y'), -1)}{\sqrt{1+ \abs{\nabla \psi (\varepsilon y')}^2}}\, .
\end{equation*}
So, by \eqref{eq3}, \eqref{eq4} and the Mean Value Theorem, one finds that
\begin{align}
\left| \frac{\partial U_{\xi}}{\partial \nu} (y)\right|& = \left|\frac{U'(\abs{y - \xi})}{\abs{y - \xi}\sqrt{1+ \abs{\nabla \psi (\varepsilon y')}^2}} \left( y' - \xi', \frac{1}{\varepsilon}\psi(\varepsilon y') - \frac{1}{\varepsilon}\psi(\varepsilon \xi')\right) \cdot \left(\nabla \psi(\varepsilon y'), -1\right)\right| \notag\\
& \leqslant \frac{C \text{e}^{ -\lambda \abs{y' - \xi'}}}{\abs{y' - \xi'}} \left(\abs{y' - \xi'} \abs{\nabla \psi(\varepsilon y')} + \frac{1}{\varepsilon}\abs{\psi(\varepsilon y') - \psi(\varepsilon \xi')}\right) \notag\\
& \leqslant C \text{e}^{- \lambda \abs{y' - \xi'}} \left(\varepsilon^{\alpha} \abs{y'}^{\alpha} + \varepsilon^{\beta }\abs{y'}^{\beta} + \left|\nabla \psi(\varepsilon(y' + \theta(\xi' - y')))\right|\right) \,\,\,\, \theta \in (0,1)\notag\\
& \leqslant C \text{e}^{- \lambda \abs{y' - \xi'}} \varepsilon^{\alpha}\left(\abs{y'}^{\alpha} + \abs{y'}^{\beta} + \abs{y' - \xi'}^{\alpha} + \abs{y' - \xi'}^{\beta}\right) \notag\\
& \leqslant C \varepsilon^{\alpha} \text{e}^{\lambda R}\text{e}^{- \lambda \abs{y'}}\left(\abs{y'}^{\alpha} + \abs{y'}^{\beta} + R^{\beta}\right) \leqslant  C \varepsilon^{\alpha} \text{e}^{- \frac{\lambda}{2} \abs{y'}} \label{eq14}
\end{align}
The above estimate and the trace inequality yield
\begin{equation}
\label{eq15}
\left|\,\,\,\int_{\partial \Omega_{\varepsilon} \cap B_{\frac{r_0}{\varepsilon}}(0)}{\frac{\partial U_{\xi}}{\partial \nu} v d \sigma}\right| \leqslant C \varepsilon^{\alpha} \norm{v}_{H^1(\Omega_{\varepsilon})}.
\end{equation}
By \eqref{eq13} and \eqref{eq15} we obtain \eqref{eq11}.

Let us now prove \eqref{eq12}. We take $e_i=\left(0, \dots, 1, \dots, 0, \frac{\partial \psi}{\partial y_i}(\varepsilon \xi') \right)$, $i = 1, \dots, N-1$ as a basis of the tangent space to $\partial \Omega_{\varepsilon}$ at $\xi = (\xi', \frac{1}{\varepsilon} \psi(\varepsilon \xi'))$. In this form, the directional derivatives $\frac{\partial U_{\xi}}{\partial e_i}$ are given by
\begin{equation*}
\frac{\partial U_{\xi}}{\partial e_i}(y) = \frac{\partial U_{\xi}}{\partial y_i}(y) + \frac{\partial U_{\xi}}{\partial y_N}(y)\frac{\partial \psi}{\partial y_i}(\varepsilon \xi')
\end{equation*}
and then, using again \eqref{eq2},
\begin{equation}
\label{eq16}
\left\|\frac{\partial U_{\xi}}{\partial e_i} - \frac{\partial U_{\xi}}{\partial y_i} \right\|_{H^1(\Omega_{\varepsilon})} = \,\, O(\varepsilon^{\alpha}).
\end{equation}
Recall that $O(\varepsilon^{\alpha})$, as $\varepsilon
\rightarrow 0$, is uniform in $\xi'$ since $\abs{\xi'} \leqslant
R$. We claim that
\begin{equation}
\label{eq17} \left(\frac{\partial U_{\xi}}{\partial e_i}
\left|\frac{\partial U_{\xi}}{\partial e_j}\right.
\right)_{H^1(\Omega_{\varepsilon})} = C_0 \delta_{ij} +
O(\varepsilon^{\alpha}) \,\,\,\,\,\,\,\,i,j=1, \dots, N-1,
\end{equation}
where $\delta_{ij} =  \begin{cases}
                              1, i=j\\
                              0, i \neq j
                              \end{cases}$
and $C_0 > 0$.

By \eqref{eq17} it suffices to prove \eqref{eq12} only for $q =
\frac{q_i}{\norm{q_i}_{H^1(\Omega_{\varepsilon})}}$, $q_i=
\frac{\partial U_{\xi}}{\partial e_i}$. Let ${v \in
H^1(\Omega_{\varepsilon})}$ then
\begin{align}
& \left|\left(I_{\varepsilon}''(U_{\xi})\left.\left[\frac{q_i}{\norm{q_i}_{H^1(\Omega_{\varepsilon})}} \right]  \right| v   \right)  \right|= \left| \frac{1}{\norm{q_i}} \left[ \left(\left.\frac{\partial U_{\xi}}{\partial e_i} \right| v \right)_{H^1(\Omega_{\varepsilon})}  - p \int_{\Omega_{\varepsilon}}{U_{\xi}^{p-1} \frac{\partial U_{\xi}}{\partial e_i} v}\right]\right| \notag\\
& \leqslant \left| \,\, \int_{\Omega_{\varepsilon}}{\left(\nabla \frac{\partial U_{\xi}}{\partial y_i} \cdot \nabla v + \frac{\partial U_{\xi}}{\partial y_i}v - p U_{\xi}^{p-1} \frac{\partial U_{\xi}}{\partial y_i} v\right) dy } \right| + O(\varepsilon^{\alpha}) \norm{v} \notag\\
& \leqslant \left|\,\, \int_{\Omega_{\varepsilon}}{\left(- \Delta \frac{\partial U_{\xi}}{\partial y_i}  + \frac{\partial U_{\xi}}{\partial y_i} - p U_{\xi}^{p-1} \frac{\partial U_{\xi}}{\partial y_i} \right)v\,\, dy } + \int_{\partial \Omega_{\varepsilon}}{ \frac{\partial}{\partial \nu} \frac{\partial U_{\xi}}{\partial y_i}v d \sigma }\right| +  O(\varepsilon^{\alpha}) \norm{v} \notag\\
& \leqslant \left(\,\,\int_{\partial \Omega_{\varepsilon}}{ \left| \frac{\partial}{\partial \nu} \frac{\partial U_{\xi}}{\partial y_i} \right|^2  d \sigma } \right)^{\frac{1}{2}} \norm{v} + O(\varepsilon^{\alpha}) \norm{v} \leqslant C \varepsilon^{\alpha} \norm{v} \label{eq18}
\end{align}
where we again used the exponential decay of the derivatives of $U$ just as in \eqref{eq13} and \eqref{eq15} (see Appendix \ref{apc}).

It remains to prove \eqref{eq17} which is a straightforward calculation. Using the Mean Value Theorem and the exponential decay of $U$ one can prove that, for $i=1, \dots, N-1$
\begin{equation}
\label{eq5}
\left\|\frac{\partial U_{\xi}}{\partial y_i} - \frac{\partial U_{\xi'}}{\partial y_i} \right\|_{H^1(\Omega_{\varepsilon})} = \,\, O(\varepsilon^{\alpha})
\end{equation}
where ${U_{\xi'} = U_{(\xi',0)}}$ (see Appendix \ref{apd}). Thus by \eqref{eq16} and \eqref{eq5} it suffices to prove
\begin{equation}
\label{eq19}
\left(\frac{\partial U_{\xi'}}{\partial y_i} \left| \frac{\partial U_{\xi'}}{\partial y_j} \right. \right)_{H^1(\Omega_{\varepsilon})}= \, \,C_0\, \delta_{ij} + O(\varepsilon^{\alpha}), \,\,\,\,\,\,\,\, i,j=1, \dots, N-1.
\end{equation}
In order to do this we write
\begin{align}
&\left(\frac{\partial U_{\xi'}}{\partial y_i} \left| \frac{\partial U_{\xi'}}{\partial y_j} \right. \right)_{H^1(\Omega_{\varepsilon})} = \int_{\R_+^N}{\left(\nabla \frac{\partial U_{\xi'}}{\partial y_i} \cdot \nabla \frac{\partial U_{\xi'}}{\partial y_j} + \frac{\partial U_{\xi'}}{\partial y_i} \frac{\partial U_{\xi'}}{\partial y_j}\right)}  \label{eq20} \\
&- \int_{\R_+^N \backslash \Omega_{\varepsilon}}{\left(\nabla \frac{\partial U_{\xi'}}{\partial y_i} \cdot \nabla \frac{\partial U_{\xi'}}{\partial y_j} + \frac{\partial U_{\xi'}}{\partial y_i} \frac{\partial U_{\xi'}}{\partial y_j}\right)}
+  \int_{\Omega_{\varepsilon} \backslash \R_+^N}{\left(\nabla \frac{\partial U_{\xi'}}{\partial y_i} \cdot \nabla \frac{\partial U_{\xi'}}{\partial y_j} + \frac{\partial U_{\xi'}}{\partial y_i} \frac{\partial U_{\xi'}}{\partial y_j}\right)}. \notag
\end{align}
After a change of variables the first integral on the right-hand side of \eqref{eq20} can be written as
\begin{equation*}
\int_{\R_+^N}{\left(\nabla \frac{\partial U}{\partial y_i} \cdot \nabla \frac{\partial U}{\partial y_j} + \frac{\partial U}{\partial y_i} \frac{\partial U}{\partial y_j}\right)} = \left( \frac{1}{2N} \norm{U}_{H^1(\R^N)}^2 \right)\delta_{ij} = C_0\, \delta_{ij}
\end{equation*}

Now we estimate the other integrals on the right-hand side of \eqref{eq20}. We will estimate only the integral which involves the first order derivative, the other is completely analogous. Away from $0$ their values are exponentially small in $\varepsilon$ so it remains to estimate them in $A_{\frac{r_0}{\varepsilon}} = \left\{\left(y', y_N\right)\, \Big| \,\,\,\abs{y'} < \frac{r_0}{\varepsilon}\,\, ,\,\, \abs{y_N} < \frac{r_0}{\varepsilon} \right\}$
\begin{align*}
& \left|\,\, - \int_{(\R_+^N \backslash \Omega_{\varepsilon}) \cap A_{\frac{r_0}{\varepsilon}}}{ \frac{\partial U_{\xi'}}{\partial y_i} \frac{\partial U_{\xi'}}{\partial y_j}} \,\,\, + \int_{(\Omega_{\varepsilon} \backslash \R_+^N) \cap A_{\frac{r_0}{\varepsilon}}}{ \frac{\partial U_{\xi'}}{\partial y_i} \frac{\partial U_{\xi'}}{\partial y_j}} \,\,\, \right| \\
& = \left|- \,\, \int_{\abs{y'} < \frac{r_0}{\varepsilon}} \int_0^{\frac{1}{\varepsilon}\psi(\varepsilon y')}{\frac{\partial U_{\xi'}}{\partial y_i}(y',y_N)\frac{\partial U_{\xi'}}{\partial y_j}(y', y_N) d y_N d y'}   \right|\\
& \leqslant C \,\,\int_{\abs{y'} < \frac{r_0}{\varepsilon}}\int_0^{\abs{\frac{1}{\varepsilon}\psi(\varepsilon y')}}{\exp{\left(-\lambda (\abs{y'-\xi'} + \abs{y_N})\right)} d y_N d y'}  \\
& \leqslant C \,\,\int_{\abs{y'} < \frac{r_0}{\varepsilon}}  \int_0^{\abs{\frac{1}{\varepsilon}\psi(\varepsilon y')}}{\text{e}^{-\lambda\abs{y'-\xi'}} \text{e}^{-\lambda \abs{y_N}} d y_N d y'} \\
& \leqslant C \,\,\int_{\abs{y'} < \frac{r_0}{\varepsilon}} \text{e}^{-\lambda\abs{y'-\xi'}} \int_0^{\abs{\frac{1}{\varepsilon}\psi(\varepsilon y')}}{ \text{e}^{-\lambda \abs{y_N}} d y_N d y'} \\
& \leqslant C \,\,\int_{\abs{y'} < \frac{r_0}{\varepsilon}} \text{e}^{-\lambda\abs{y'-\xi'}} \left|\frac{1}{\varepsilon}\psi(\varepsilon y')\right| d y' \\
& \leqslant C \epsilon^{\alpha} \text{e}^{\lambda R} \,\,\int_{\R^{N-1}} \text{e}^{-\lambda\abs{y'}} (\abs{y'}^{\alpha +1} + \abs{y'}^{\beta +1}) d y' \\
& \leqslant C \epsilon^{\alpha}.
\end{align*}
\end{proof}
\section{The Lyapunov-Schmidt Reduction}\label{s3}
As in \cite{AM,M} we look for critical points of $I_{\varepsilon}$ in the form $u = z+w$ with $z \in \mathcal{Z}^{\varepsilon}$ and $w \in W = (T_z \mathcal{Z}^{\varepsilon})^\bot$. If $\mathcal{P} : H^1(\Omega_{\varepsilon}) \rightarrow W$ denotes the orthogonal projection onto $W$, the equation $I_{\varepsilon}'(z+w) =0$ is clearly equivalent to the following system
\begin{align}
\label{eq21}
\begin{cases}
  \mathcal{P} I_{\varepsilon}'(z+w)=0, \\
  (Id - \mathcal{P})I_{\varepsilon}'(z+w)=0.
\end{cases}
\end{align}
We decompose
\begin{equation*}
W= \left(T_{U_{\xi}} \mathcal{Z}^{\varepsilon} \right)^{\bot} = W_1 \oplus W_2 \, , \,\,\,\,\,\,W_1 \,\bot\, W_2
\end{equation*}
where $W_1= \text{Span}_{H^1(\Omega_{\varepsilon})}\left\{ \mathcal{P} U_{\xi}\right\}$ and $W_2=\left( \text{Span}_{H^1(\Omega_{\varepsilon})}\left\{U_{\xi}, \frac{\partial U_{\xi}}{\partial e_1}, \dots, \frac{\partial U_{\xi}}{\partial e_{N-1}} \right\}\right)^{\bot}$.

The following two Lemmas are well known see \cite[Proposition 1.2]{YYL}, see also \cite[Proposition 19]{M}.

\begin{lemma}
\label{le4}
Given $R>0$ there exists $C>0$ and $\varepsilon_0 >0$ such that
\begin{equation}
\label{eq22}
I_{\varepsilon}''(U_{\xi})[v,v] \geqslant C \norm{v}^2, \,\,\,\,\,\,\,\,\,\,\,\,\, \text{for every}\,\,v \in W_2
\end{equation}
for all $\xi=(\xi', \frac{1}{\varepsilon}\psi(\varepsilon \xi'))$, $\abs{\xi'} \leqslant R$ and $0< \varepsilon< \varepsilon_0$.
\end{lemma}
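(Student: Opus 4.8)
The statement to prove is Lemma~\ref{le4}, the coercivity estimate
\[
I_{\varepsilon}''(U_{\xi})[v,v] \geqslant C \norm{v}^2 \qquad \text{for all } v \in W_2,
\]
uniformly for $\abs{\xi'}\leqslant R$ and $\varepsilon$ small. Here is how I would approach it.

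\medskip
\noindent\textbf{Plan of proof.}
The idea is to reduce the problem on the scaled domain $\Omega_{\varepsilon}$ to the known nondegeneracy of the limiting profile $U$ on the half-space $\R_+^N$, and to treat the difference between $\Omega_{\varepsilon}$ and $\R_+^N$ as a perturbation that is controlled by the flatness estimates already proved in Lemma~\ref{le3} (and in particular by the bounds \eqref{eq14} and the change-of-variables arguments used there). Concretely, write
\[
I_{\varepsilon}''(U_{\xi})[v,v] = \int_{\Omega_{\varepsilon}}\bigl(\abs{\nabla v}^2 + v^2\bigr) - p\int_{\Omega_{\varepsilon}} U_{\xi}^{p-1} v^2,
\]
and argue by contradiction: suppose there is a sequence $\varepsilon_n \to 0$, $\xi_n' $ with $\abs{\xi_n'}\leqslant R$, and $v_n \in W_2$ with $\norm{v_n}_{H^1(\Omega_{\varepsilon_n})} = 1$ such that $I_{\varepsilon_n}''(U_{\xi_n})[v_n,v_n] \to \ell \leqslant 0$. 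After translating by $\xi_n$ (so that the peak sits at the origin) and extracting a subsequence, $\xi_n' \to \xi_0'$, and the translated domains $\Omega_{\varepsilon_n} - \xi_n$ converge (locally, because of the flatness hypotheses \eqref{eq2}--\eqref{eq4}, which force the rescaled boundary to flatten out to the hyperplane $x_N = 0$) to the half-space $\R_+^N$. The translated functions $\tilde v_n$ are bounded in $H^1$, hence converge weakly to some $v_0 \in H^1(\R_+^N)$.

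\medskip
\noindent\textbf{Key steps.}
(1) \emph{Passing the orthogonality to the limit.} The conditions $v_n \perp U_{\xi_n}$ and $v_n \perp \partial_{e_i} U_{\xi_n}$ for $i=1,\dots,N-1$ must pass to the limit: using \eqref{eq16} and \eqref{eq5} to replace $\partial_{e_i}U_{\xi_n}$ by $\partial_{y_i}U_{\xi_n'}$ up to $O(\varepsilon_n^{\alpha})$, and the exponential decay of $U$ and its derivatives to localize the inner products, one gets that the weak limit $v_0$ satisfies $(v_0 \mid U)_{H^1(\R_+^N)} = 0$ and $(v_0 \mid \partial_{y_i} U)_{H^1(\R_+^N)} = 0$ for $i=1,\dots,N-1$. (2) \emph{Lower semicontinuity and the limiting quadratic form.} Since $U_{\xi_n}^{p-1}$ concentrates near the origin and decays exponentially, $\int U_{\xi_n}^{p-1} v_n^2 \to \int_{\R_+^N} U^{p-1} v_0^2$ by the compact (Rellich) embedding on bounded sets together with the exponential tail estimate; by weak lower semicontinuity of the $H^1$ norm, $\ell \geqslant \int_{\R_+^N}(\abs{\nabla v_0}^2 + v_0^2) - p\int_{\R_+^N}U^{p-1}v_0^2$. (3) \emph{Nondegeneracy on the half-space.} By the standard nondegeneracy result for $U$ — the kernel of $-\Delta + 1 - pU^{p-1}$ on $\R^N$ is spanned by $\partial_{y_1}U,\dots,\partial_{y_N}U$, and on $\R_+^N$ with Neumann condition the even reflection forces $\partial_{y_N}U$ to be excluded, so the kernel is $\mathrm{Span}\{\partial_{y_1}U,\dots,\partial_{y_{N-1}}U\}$, while $U$ itself lies in the negative subspace with $(-p+1)$-type eigenvalue — the quadratic form $v \mapsto \int_{\R_+^N}(\abs{\nabla v}^2 + v^2) - p\int U^{p-1}v^2$ is \emph{positive definite} on the orthogonal complement of $\mathrm{Span}\{U, \partial_{y_1}U,\dots,\partial_{y_{N-1}}U\}$. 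Hence the limiting form evaluated at $v_0$ is $\geqslant c\norm{v_0}_{H^1(\R_+^N)}^2 \geqslant 0$, so $\ell \geqslant 0$, forcing $\ell = 0$ and $v_0 = 0$. (4) \emph{Ruling out vanishing.} If $v_0 = 0$ then $\int U_{\xi_n}^{p-1}v_n^2 \to 0$, so $\ell = \lim \int_{\Omega_{\varepsilon_n}}(\abs{\nabla v_n}^2 + v_n^2) = \lim \norm{v_n}^2 = 1$, contradicting $\ell \leqslant 0$. This completes the contradiction and proves \eqref{eq22}.

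\medskip
\noindent\textbf{Main obstacle.}
The delicate point is Step~(1) together with the domain-convergence bookkeeping: one must justify that the rescaled boundary $\partial\Omega_{\varepsilon_n} - \xi_n$ really converges to the flat half-space in a strong enough sense (locally in $C^1$, say) so that $H^1(\Omega_{\varepsilon_n})$ functions can be compared with $H^1(\R_+^N)$ functions — extending $\tilde v_n$ across the (nearly flat) boundary, using a uniform extension operator, and controlling the error in all the inner products by the flatness estimates \eqref{eq3}--\eqref{eq4} of order $\varepsilon_n^{\alpha}$. Once that comparison is set up, the rest is the by-now-standard blow-up/nondegeneracy argument; the uniformity in $\xi'$ over $\abs{\xi'}\leqslant R$ is automatic from the compactness of the parameter set and is already reflected in the uniform constants of Lemma~\ref{le3}. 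Since the paper explicitly says this lemma is ``well known'' (citing \cite[Proposition 1.2]{YYL} and \cite[Proposition 19]{M}), I would in the write-up either reproduce this contradiction argument in a few lines or simply invoke those references after noting that the flatness of $\partial\Omega$ only improves the relevant error terms.
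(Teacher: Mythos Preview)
Your proposal is correct and is precisely the standard contradiction/blow-up argument used in the references the paper cites; the paper itself does not prove Lemma~\ref{le4} at all but simply defers to \cite[Proposition~1.2]{YYL} and \cite[Proposition~19]{M}. Your write-up therefore supplies more detail than the paper, and your closing remark---that one could either reproduce the argument or invoke those references after noting that the extra flatness only improves the error terms---matches exactly what the authors do.
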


\begin{lemma}
\label{le5}
Given $R>0$ there exists $C > 0$ and $\varepsilon_0 > 0$ such that
\begin{equation}
\label{eq38}
\left|I_{\varepsilon}''(U_{\xi})[\mathcal{P}U_{\xi},\mathcal{P}U_{\xi}]\right| \geqslant C \norm{\mathcal{P}U_{\xi}}^2
\end{equation}
for all $\xi=(\xi', \frac{1}{\varepsilon}\psi(\varepsilon \xi'))$, $\abs{\xi'} \leqslant R$ and $0< \varepsilon< \varepsilon_0$.
\end{lemma}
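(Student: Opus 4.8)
The plan is to show that, in $H^1(\Omega_{\varepsilon})$, the projection $\mathcal{P}U_{\xi}$ coincides with $U_{\xi}$ up to a term of order $\varepsilon^{\alpha}$ --- the point being that $U_{\xi}$ is ``almost orthogonal'' to $T_{U_{\xi}}\mathcal{Z}^{\varepsilon}$, which is spanned by first order derivatives of $U_{\xi}$ --- and then to evaluate $I_{\varepsilon}''(U_{\xi})[U_{\xi},U_{\xi}]$ in the limit $\varepsilon\to 0$, where, near the concentration region, $\Omega_{\varepsilon}$ degenerates to the half-space $\R_+^N$. The Nehari identity for $U$ will force this limit to be strictly negative, so that $\bigl|I_{\varepsilon}''(U_{\xi})[\mathcal{P}U_{\xi},\mathcal{P}U_{\xi}]\bigr|$ stays bounded away from $0$ while $\norm{\mathcal{P}U_{\xi}}^2$ stays bounded, which is exactly \eqref{eq38}.

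\emph{Step 1.} Since $\mathcal{P}$ is the orthogonal projection onto $\bigl(T_{U_{\xi}}\mathcal{Z}^{\varepsilon}\bigr)^{\bot}$, the difference $\phi_{\xi}:=\mathcal{P}U_{\xi}-U_{\xi}=-(Id-\mathcal{P})U_{\xi}$ belongs to $T_{U_{\xi}}\mathcal{Z}^{\varepsilon}$, so $\phi_{\xi}=-\sum_{j=1}^{N-1}a_j\,\partial_{e_j}U_{\xi}$, where the coefficients solve
\begin{equation*}
\sum_{j=1}^{N-1}a_j\left(\partial_{e_j}U_{\xi}\,\Big|\,\partial_{e_i}U_{\xi}\right)_{H^1(\Omega_{\varepsilon})}=\left(U_{\xi}\,\Big|\,\partial_{e_i}U_{\xi}\right)_{H^1(\Omega_{\varepsilon})},\qquad i=1,\dots,N-1.
\end{equation*}
By \eqref{eq17} the Gram matrix equals $C_0\,Id+O(\varepsilon^{\alpha})$, hence is uniformly invertible for $\varepsilon$ small. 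For the right-hand sides I would use \eqref{eq16} and \eqref{eq5} to replace $\partial_{e_i}U_{\xi}$ by $\partial_{y_i}U_{\xi'}$, and (since $\tfrac{1}{\varepsilon}\psi(\varepsilon\xi')=\varepsilon^{\alpha}Q(\xi')+O(\varepsilon^{\beta})=O(\varepsilon^{\alpha})$ for $\abs{\xi'}\leqslant R$, by \eqref{eq2}--\eqref{eq4}) the argument of \eqref{eq5} to replace $U_{\xi}$ by $U_{\xi'}=U_{(\xi',0)}$, in both cases up to $O(\varepsilon^{\alpha})$ in $H^1(\Omega_{\varepsilon})$; then
\begin{equation*}
\left(U_{\xi'}\,\Big|\,\partial_{y_i}U_{\xi'}\right)_{H^1(\R_+^N)}=-\tfrac{1}{2}\,\partial_{\xi_i}\norm{U_{\xi'}}_{H^1(\R_+^N)}^2=0
\end{equation*}
because $\norm{U_{\xi'}}_{H^1(\R_+^N)}^2$ does not depend on $\xi'$, and passing from $\R_+^N$ to $\Omega_{\varepsilon}$ costs only $O(\varepsilon^{\alpha})$ by the localization estimate carried out at the end of the proof of Lemma~\ref{le3}. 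Hence each right-hand side is $O(\varepsilon^{\alpha})$, so $a_j=O(\varepsilon^{\alpha})$ and $\norm{\phi_{\xi}}_{H^1(\Omega_{\varepsilon})}=O(\varepsilon^{\alpha})$ uniformly in $\abs{\xi'}\leqslant R$; in particular $\norm{\mathcal{P}U_{\xi}}^2=\norm{U_{\xi}}^2+O(\varepsilon^{\alpha})$.

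\emph{Step 2.} Since $\phi_{\xi}\in T_{U_{\xi}}\mathcal{Z}^{\varepsilon}$ and $\norm{U_{\xi}}\leqslant C$, estimate \eqref{eq12} of Lemma~\ref{le3} gives $\bigl|I_{\varepsilon}''(U_{\xi})[U_{\xi},\phi_{\xi}]\bigr|+\bigl|I_{\varepsilon}''(U_{\xi})[\phi_{\xi},\phi_{\xi}]\bigr|=O(\varepsilon^{2\alpha})$, whence $I_{\varepsilon}''(U_{\xi})[\mathcal{P}U_{\xi},\mathcal{P}U_{\xi}]=I_{\varepsilon}''(U_{\xi})[U_{\xi},U_{\xi}]+O(\varepsilon^{2\alpha})$. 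For $\varepsilon$ small $U_{\xi}>0$ in $\Omega_{\varepsilon}$, so
\begin{equation*}
I_{\varepsilon}''(U_{\xi})[U_{\xi},U_{\xi}]=\int_{\Omega_{\varepsilon}}\bigl(\abs{\nabla U_{\xi}}^2+U_{\xi}^2\bigr)-p\int_{\Omega_{\varepsilon}}U_{\xi}^{p+1}.
\end{equation*}
Translating $U_{\xi}$ back to $U$ and using that the boundary of $\Omega_{\varepsilon}$ then lies, on bounded sets, within distance $O(\varepsilon^{\alpha})$ (times a polynomial factor in $\abs{y'}$) of $\{y_N=0\}$, together with the exponential decay of $U$ and $\abs{\nabla U}$ --- exactly as in the last display of the proof of Lemma~\ref{le3} --- one obtains, uniformly in $\abs{\xi'}\leqslant R$,
\begin{equation*}
\int_{\Omega_{\varepsilon}}\bigl(\abs{\nabla U_{\xi}}^2+U_{\xi}^2\bigr)=\tfrac{1}{2}\norm{U}_{H^1(\R^N)}^2+O(\varepsilon^{\alpha}),\qquad \int_{\Omega_{\varepsilon}}U_{\xi}^{p+1}=\tfrac{1}{2}\int_{\R^N}U^{p+1}+O(\varepsilon^{\alpha}),
\end{equation*}
by the radial symmetry of $U$. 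Multiplying \eqref{eq8} by $U$ and integrating gives the Nehari identity $\norm{U}_{H^1(\R^N)}^2=\int_{\R^N}U^{p+1}$, so $I_{\varepsilon}''(U_{\xi})[\mathcal{P}U_{\xi},\mathcal{P}U_{\xi}]=\tfrac{1-p}{2}\int_{\R^N}U^{p+1}+O(\varepsilon^{\alpha})$, which is negative and bounded away from $0$, uniformly, for $\varepsilon$ small.

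\emph{Conclusion.} Combining the two steps, for $\varepsilon$ small we have $\bigl|I_{\varepsilon}''(U_{\xi})[\mathcal{P}U_{\xi},\mathcal{P}U_{\xi}]\bigr|\geqslant \tfrac{p-1}{4}\int_{\R^N}U^{p+1}$ while $\norm{\mathcal{P}U_{\xi}}^2\leqslant\norm{U}_{H^1(\R^N)}^2=\int_{\R^N}U^{p+1}$, so \eqref{eq38} holds with $C=\tfrac{p-1}{4}$. No serious obstacle is expected; the only point requiring care is the uniformity in $\xi'$ of all the $O(\varepsilon^{\alpha})$ remainders, i.e.\ integrating the thin region where $\Omega_{\varepsilon}$ (recentred at $U$) differs from $\R_+^N$ against the exponentially decaying weights --- but this is precisely the computation already performed in Lemma~\ref{le3}.
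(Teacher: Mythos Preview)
Your argument is correct. The paper does not actually prove Lemma~\ref{le5}: it simply records the statement and defers to \cite[Proposition~1.2]{YYL} and \cite[Proposition~19]{M}. What you have written is therefore strictly more than what the paper provides, and it is the natural direct computation: reduce $\mathcal{P}U_{\xi}$ to $U_{\xi}$ modulo $O(\varepsilon^{\alpha})$ via the near-orthogonality $(U_{\xi}\,|\,\partial_{e_i}U_{\xi})_{H^1(\Omega_{\varepsilon})}=O(\varepsilon^{\alpha})$ and the Gram matrix \eqref{eq17}, then evaluate $I_{\varepsilon}''(U_{\xi})[U_{\xi},U_{\xi}]$ by comparing $\Omega_{\varepsilon}$ to $\R_+^N$ and using the Nehari identity $\norm{U}_{H^1(\R^N)}^2=\int_{\R^N}U^{p+1}$ to get the strictly negative leading term $\tfrac{1-p}{2}\int_{\R^N}U^{p+1}$.

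Two very minor remarks. First, in Step~2 you do not really need the sharper $O(\varepsilon^{2\alpha})$ for the cross terms; since the main term is of order one, $O(\varepsilon^{\alpha})$ from either \eqref{eq12} or simply from $\norm{\phi_{\xi}}=O(\varepsilon^{\alpha})$ together with the uniform boundedness of $I_{\varepsilon}''(U_{\xi})$ would already suffice. Second, the closing phrase ``no serious obstacle is expected'' is appropriate here: the only technical point, uniformity in $\xi'$ of the $O(\varepsilon^{\alpha})$ remainders when integrating over the thin region between $\Omega_{\varepsilon}$ and $\R_+^N$, is indeed the same exponential-weight computation already done in the proof of Lemma~\ref{le3} (and in Appendix~\ref{apd}), so you may safely invoke it.
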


Consider the operator
$$L_{\varepsilon, \xi}=
\mathcal{P}I_{\varepsilon}''(U_{\xi})|_W : W \rightarrow W.$$ As a
consequence of Lemmas \ref{le4} and \ref{le5} one can prove
\begin{equation}
\label{eq44}
\norm{L_{\varepsilon, \xi}(v)} \geqslant C \norm{v} \,\,\,\,\,\,\,\,\, \forall \,\ v \in W.
\end{equation}
Therefore, for $\varepsilon$ small, $L_{\varepsilon, \xi}$ is invertible  and
\begin{equation}
\label{eq46}
\norm{L_{\varepsilon, \xi}^{-1}} \leqslant \frac{1}{C}.
\end{equation}

This property allows us to perform a finite-dimensional reduction of problem \eqref{eq6} on the manifold $\mathcal{Z}^{\varepsilon}$.

\begin{proposition}
\label{prop6} Given $R>0$ there exists $\varepsilon_0 >0$ and $C>
0$ such that for all ${\xi=(\xi',
\frac{1}{\varepsilon}\psi(\varepsilon \xi')) \in \partial
\Omega_{\varepsilon}}$ with $\abs{\xi'} \leqslant R$ and $0 <
\varepsilon < \varepsilon_0$ there exists a unique
${w=w(\varepsilon, \xi) \in W}$ such that $I_{\varepsilon}'(U_\xi
+ w ) \in T_{U_{\xi}} \mathcal{Z}^{\varepsilon}$. Moreover the
function $w(\varepsilon, \xi)$ is of class $C^1$ with respect to
$\xi \in
\partial \Omega_{\varepsilon}$ and satisfies
\begin{align}
\norm{w} \leqslant &C \varepsilon^{\alpha} \label{eq47}\\
\norm{\partial_{\xi_i}w} \leqslant& C \varepsilon^{\gamma \alpha}
\,\,\,\,\,\,\,\,\text{for} \,\,\, i= 1, \dots, N-1 \label{eq48}
\end{align}
\noindent where   $\gamma= \text{min} \left\{1,p-1\right\}$.
Moreover, the function ${\Phi_{\varepsilon} : B_R(0) \cap \partial \Omega_{\varepsilon} \rightarrow \R}$ defined by $\Phi_{\varepsilon}(\xi) = I_{\varepsilon}(U_{\xi}+ w(\varepsilon, \xi))$ is of class $C^1$ and
\begin{equation*}
\nabla \Phi_{\varepsilon}(\xi_0) = 0 \,\,\,\,\, \Longrightarrow \,\,\,\, I_{\varepsilon}'(U_{\xi_0}+ w(\varepsilon, \xi_0))=0
\end{equation*}
\end{proposition}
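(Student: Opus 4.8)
The plan is to carry out the classical Lyapunov--Schmidt contraction, using the uniform invertibility \eqref{eq44}--\eqref{eq46} of $L_{\varepsilon,\xi}$. Write
\[
I_\varepsilon'(U_\xi + w) = I_\varepsilon'(U_\xi) + I_\varepsilon''(U_\xi)[w] + N_\varepsilon(w),
\]
where $N_\varepsilon(w)\in H^1(\Omega_\varepsilon)$ represents the functional $v\mapsto -\int_{\Omega_\varepsilon}\big((U_\xi+w)_+^{p}-U_\xi^{p}-p\,U_\xi^{p-1}w\big)v$. Applying $\mathcal{P}$, the first equation of \eqref{eq21} with $z=U_\xi$ is equivalent to $L_{\varepsilon,\xi}w=-\mathcal{P}I_\varepsilon'(U_\xi)-\mathcal{P}N_\varepsilon(w)$, i.e.\ to the fixed point equation $w=\mathcal{F}_{\varepsilon,\xi}(w):=-L_{\varepsilon,\xi}^{-1}\big(\mathcal{P}I_\varepsilon'(U_\xi)+\mathcal{P}N_\varepsilon(w)\big)$ on $W$, which makes sense by \eqref{eq46}. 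As in \cite{AM,M}, the subcriticality of $p$ together with the Sobolev embeddings of $H^1(\Omega_\varepsilon)$ (with constants controlled as $\varepsilon\to0$, just as for the trace inequality in Lemma \ref{le3}) give $\norm{N_\varepsilon(w)}\leqslant C\norm{w}^{1+\gamma}$ and $\norm{N_\varepsilon(w_1)-N_\varepsilon(w_2)}\leqslant C\big(\norm{w_1}+\norm{w_2}\big)^{\gamma}\norm{w_1-w_2}$ with $\gamma=\min\{1,p-1\}$. Combining these with \eqref{eq11} and \eqref{eq46}, one checks that for $M$ large and $\varepsilon$ small the map $\mathcal{F}_{\varepsilon,\xi}$ sends the ball $\{w\in W:\norm{w}\leqslant M\varepsilon^{\alpha}\}$ into itself and is a contraction there; the contraction mapping theorem then gives the fixed point $w=w(\varepsilon,\xi)$, unique in that ball, with $\norm{w}\leqslant C\varepsilon^{\alpha}$, which is \eqref{eq47}.

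For the $C^1$ dependence on $\xi$ I would apply the implicit function theorem to $G(\xi,w):=\mathcal{P}_\xi I_\varepsilon'(U_\xi+w)$, read into a fixed target space via the $\xi$-dependent (of class $C^1$) isometries identifying $W=(T_{U_\xi}\mathcal{Z}^\varepsilon)^\perp$ with a reference subspace: here one uses that $\xi\mapsto U_\xi$ and $\xi\mapsto\mathcal{P}_\xi$ are of class $C^1$ (the chart $\xi'\mapsto(\xi',\frac{1}{\varepsilon}\psi(\varepsilon\xi'))$ is smooth since $\psi$ is, and $U$ with its derivatives up to third order decays exponentially), that $I_\varepsilon\in C^2$, and that $\partial_wG=L_{\varepsilon,\xi}$ is invertible by \eqref{eq44}. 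To prove \eqref{eq48} I would differentiate the identity $\mathcal{P}_\xi I_\varepsilon'\big(U_\xi+w(\varepsilon,\xi)\big)=0$ with respect to $\xi_i$. Two remarks make this manageable: differentiating the constraints $(w\,|\,\partial_{\xi_j}U_\xi)_{H^1(\Omega_\varepsilon)}=0$ shows that the component of $\partial_{\xi_i}w$ along $T_{U_\xi}\mathcal{Z}^\varepsilon$ has norm $O(\varepsilon^\alpha)$, so $\partial_{\xi_i}w=\mathcal{P}_\xi\partial_{\xi_i}w+O(\varepsilon^\alpha)$; and the operator norm of $I_\varepsilon''(U_\xi+w)-I_\varepsilon''(U_\xi)$ is $O(\norm{w}^\gamma)=O(\varepsilon^{\gamma\alpha})$ by subcriticality. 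Splitting $I_\varepsilon''(U_\xi+w)=I_\varepsilon''(U_\xi)+\big(I_\varepsilon''(U_\xi+w)-I_\varepsilon''(U_\xi)\big)$ and collecting the leading term $L_{\varepsilon,\xi}(\mathcal{P}_\xi\partial_{\xi_i}w)=\mathcal{P}_\xi I_\varepsilon''(U_\xi)[\mathcal{P}_\xi\partial_{\xi_i}w]$, one is led to
\[
L_{\varepsilon,\xi}\big(\mathcal{P}_\xi\partial_{\xi_i}w\big)=-(\partial_{\xi_i}\mathcal{P}_\xi)\big[I_\varepsilon'(U_\xi+w)\big]-\mathcal{P}_\xi I_\varepsilon''(U_\xi)[\partial_{\xi_i}U_\xi]-\mathcal{P}_\xi\big(I_\varepsilon''(U_\xi+w)-I_\varepsilon''(U_\xi)\big)\big[\partial_{\xi_i}U_\xi+\partial_{\xi_i}w\big]+O(\varepsilon^\alpha).
\]
The first term is $O(\varepsilon^\alpha)$, because $\norm{I_\varepsilon'(U_\xi+w)}\leqslant C\varepsilon^\alpha$ (by \eqref{eq11}, \eqref{eq47} and the uniform boundedness of $I_\varepsilon''(U_\xi)$) and $\partial_{\xi_i}\mathcal{P}_\xi$ is uniformly bounded; the second is $O(\varepsilon^\alpha)$ by \eqref{eq12}, since $\partial_{\xi_i}U_\xi\in T_{U_\xi}\mathcal{Z}^\varepsilon$; and the third is $O\big(\varepsilon^{\gamma\alpha}(1+\norm{\partial_{\xi_i}w})\big)$. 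Using \eqref{eq44}--\eqref{eq46}, the estimate $\norm{\partial_{\xi_i}w}\leqslant\norm{\mathcal{P}_\xi\partial_{\xi_i}w}+O(\varepsilon^\alpha)$, and absorbing the $\varepsilon^{\gamma\alpha}\norm{\mathcal{P}_\xi\partial_{\xi_i}w}$ contribution on the left for $\varepsilon$ small, one obtains $\norm{\partial_{\xi_i}w}\leqslant C\varepsilon^{\gamma\alpha}$, i.e.\ \eqref{eq48} (recall $\gamma\leqslant1$, so $\varepsilon^\alpha\leqslant\varepsilon^{\gamma\alpha}$).

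Finally, $\Phi_\varepsilon(\xi)=I_\varepsilon(U_\xi+w(\varepsilon,\xi))$ is $C^1$ as a composition, and the chain rule gives $\partial_{\xi_i}\Phi_\varepsilon(\xi)=(I_\varepsilon'(U_\xi+w)\,|\,\partial_{\xi_i}U_\xi+\partial_{\xi_i}w)_{H^1(\Omega_\varepsilon)}$. Differentiating $(w(\varepsilon,\xi)\,|\,\partial_{\xi_j}U_\xi)_{H^1(\Omega_\varepsilon)}=0$ (valid since $w\in(T_{U_\xi}\mathcal{Z}^\varepsilon)^\perp$) gives $(\partial_{\xi_i}w\,|\,\partial_{\xi_j}U_\xi)=-(w\,|\,\partial_{\xi_i\xi_j}U_\xi)=O(\varepsilon^\alpha)$, the derivatives of $U$ involved being uniformly bounded in $H^1(\Omega_\varepsilon)$ by its exponential decay. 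Writing $I_\varepsilon'(U_\xi+w)=\sum_{j}c_j\,\partial_{\xi_j}U_\xi$ — which lies in $T_{U_\xi}\mathcal{Z}^\varepsilon$ by \eqref{eq10}, with $\sum_j\abs{c_j}\leqslant C\norm{I_\varepsilon'(U_\xi+w)}$ by the near-orthonormality \eqref{eq17} — and substituting, one finds $\partial_{\xi_i}\Phi_\varepsilon(\xi)=C_0\,c_i+O(\varepsilon^\alpha)\sum_j\abs{c_j}$. Hence $\nabla\Phi_\varepsilon(\xi_0)=0$ forces $\sum_i\abs{c_i}\leqslant C\varepsilon^\alpha\sum_j\abs{c_j}$, so all the $c_i$ vanish once $\varepsilon$ is small, that is $I_\varepsilon'(U_{\xi_0}+w(\varepsilon,\xi_0))=0$. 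I expect the main obstacle to be the bookkeeping behind \eqref{eq48}: one must handle the $\xi$-dependence of the spaces $W=W_\xi$ and the projections $\mathcal{P}_\xi$ while differentiating the defining equation, and carefully track that the nonlinearity only contributes a $\gamma$-H\"older term in $w$ — which is exactly what degrades the exponent from $\alpha$ in \eqref{eq47} to $\gamma\alpha$ in \eqref{eq48}.
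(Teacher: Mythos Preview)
Your proposal is correct and spells out in detail the standard Lyapunov--Schmidt argument (contraction on the auxiliary equation, implicit function theorem for $C^1$ dependence, differentiation of the defining equation for \eqref{eq48}, and the natural constraint argument for the last claim) that the paper does not prove at all but simply defers to \cite[Proposition~2.1]{AMMP}. The approach you take is precisely the one found in such references, so there is nothing to compare: you have written out what the paper merely cites.
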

\begin{proof}
This is well known, for example we refer to \cite[Proposition 2.1]{AMMP}.
\end{proof}

\section{Estimate of the number of critical points} \label{estimate}

In this section we give the proof of the first part of Theorem \ref{mainteo}.

\begin{proof}[Proof of \eqref{eq150}]
We take $R>0$ such that $\Xi \subseteq B_R(0) \subset \R^{N-1}$
and $\varepsilon_0 > 0$ such that all the results of the previous
section hold true and $R < \frac{r_0}{\varepsilon}$ for $0 <
\varepsilon < \varepsilon_0$.

By Proposition \ref{prop6} it suffices to estimate the number of critical points of the functional ${\Phi_{\varepsilon} : B_R(0) \cap \partial \Omega_{\varepsilon} \rightarrow \R}$ defined by
$\Phi_{\varepsilon}(\xi)=I_{\varepsilon}(U_\xi + w(\varepsilon, \xi))$. One has
\begin{equation}
\label{eq51}
\partial_{\xi_i}\Phi_{\varepsilon}(\xi)= \left( I_{\varepsilon}'(U_\xi + w)| \partial_{\xi_i}U_{\xi}+ \partial_{\xi_i}w \right).
\end{equation}
Next we write
\begin{equation}
\label{eq52}
I_{\varepsilon}'(U_\xi + w) = I_{\varepsilon}'(U_\xi) + I_{\varepsilon}''(U_\xi)[w] + R(\xi, w)
\end{equation}
where
\begin{equation*}
R(\xi, w) = I_{\varepsilon}'(U_\xi + w) - I_{\varepsilon}'(U_\xi) - I_{\varepsilon}''(U_\xi)[w]
\end{equation*}
and
\begin{equation*}
R(\xi, w)[v] = - \int_{\Omega_{\varepsilon}} \left[ (U_\xi + w)_{+}^p - U_\xi^p - pU_\xi^{p-1}w \right]v.
\end{equation*}
By the inequality
\begin{equation*}
|(a+b)_+^p - a_+^p - pa_+^{p-1}b| \leqslant
\begin{cases}
C|b|^p  &\text{for} \,\,\, p \leqslant 2 \\
C(|b|^2 + |b|^p)  &\text{for} \,\,\, p > 2
\end{cases}
\end{equation*}
for all $a,b \in \R$ such that $|a| \leqslant M$ with constant $C = C(p,M)$ we have
\begin{equation}
\label{eq52a} \norm{R(\xi,w)} \leqslant C(\norm{w}^2 +
\norm{w}^p).
\end{equation}
Hence
\begin{align}
\partial_{\xi_i}\Phi_{\varepsilon}(\xi)& = \left(I_{\varepsilon}'(U_\xi)+I_{\varepsilon}''(U_\xi
[w]| \partial_{\xi_i}U_{\xi}+ \partial_{\xi_i}w \right) + O(\varepsilon^{(1+ \gamma)\alpha})
\quad \text{\small by \eqref{eq52}-\eqref{eq52a} and \eqref{eq47},\eqref{eq48}}, \notag \\
& = \left(I_{\varepsilon}'(U_\xi)| \partial_{\xi_i}U_{\xi} + \partial_{\xi_i}w \right) + \left(I_{\varepsilon}''(U_\xi)[w]| \partial_{\xi_i}U_{\xi} +  \partial_{\xi_i}w \right) + O(\varepsilon^{(1+ \gamma)\alpha})  \notag \\
& = \left(I_{\varepsilon}'(U_\xi)| \partial_{\xi_i}U_{\xi} + \partial_{\xi_i}w \right) + \left(I_{\varepsilon}''(U_\xi)[w]| \partial_{\xi_i}U_{\xi}\right) + O(\varepsilon^{(1+ \gamma)\alpha})  \, \text{\small by \eqref{eq47}-\eqref{eq48} } \notag \\
& = \left(I_{\varepsilon}'(U_\xi)| \partial_{\xi_i}U_{\xi} + \partial_{\xi_i}w \right) + O(\varepsilon^{(1+ \gamma)\alpha})  \qquad \text{\small by \eqref{eq12} } \notag \\
& = \left(I_{\varepsilon}'(U_\xi)| \partial_{\xi_i}U_{\xi} \right) + O(\varepsilon^{(1+ \gamma)\alpha})  \qquad \text{\small by \eqref{eq11} and \eqref{eq48} } \notag \\
& = \left( I_{\varepsilon}'(U_\xi) \left| \frac{\partial U_{\xi}}{\partial y_i} \right. \right) + O(\varepsilon^{(1+ \gamma)\alpha}) \qquad \text{\small by \eqref{eq11} and \eqref{eq16} .} \label{eq53}
\end{align}
As in the proof of Lemma \ref{le3} we write
\begin{align}
\left( I_{\varepsilon}'(U_\xi) \left| \frac{\partial U_{\xi}}{\partial y_i} \right. \right) &= \int_{\partial \Omega_{\varepsilon}}{\frac{\partial U_{\xi}}{\partial \nu}\frac{\partial U_{\xi}}{\partial y_i} \,d \sigma} \notag \\
& = \int_{G_{\frac{r_0}{\varepsilon}} \cap \partial \Omega_{\varepsilon}}{\frac{\partial U_{\xi}}{\partial \nu}\frac{\partial U_{\xi}}{\partial y_i} \,d \sigma} + O \left(\text{e}^{- \frac{c_0}{\varepsilon}}\right) \label{eq54}
\end{align}
and evaluate the above integral using the function $\psi$ to parametrize the portion of $\partial \Omega_{\varepsilon}$ in $G_{\frac{r_0}{\varepsilon}} = \left\{(y', \frac{1}{\varepsilon}\psi(\varepsilon y'))\, \left|\right. \,\abs{y'}< \frac{r_0}{\varepsilon} \right\}$
\begin{align}
& \int_{G_{\frac{r_0}{\varepsilon}} \cap \partial \Omega_{\varepsilon}}{\frac{\partial U_{\xi}}{\partial \nu}(y)\frac{\partial U_{\xi}}{\partial y_i}(y) \,d \sigma_y} \notag \\
& = \int_{\abs{y'}< \frac{r_0}{\varepsilon}}{\nabla U_{\xi}\left(y', \frac{1}{\varepsilon}\psi(\varepsilon y')\right) \cdot (\nabla \psi(\varepsilon y'), -1) \frac{\partial U_{\xi}}{\partial y_i}\left(y', \frac{1}{\varepsilon}\psi(\varepsilon y')\right) d y'}  \notag \\
\begin{split}
& = \int_{\abs{y'}< \frac{r_0}{\varepsilon}} \Bigg[\sum_{j=1}^{N-1}\frac{\partial U}{\partial y_j}\left(y'- \xi',\frac{1}{\varepsilon}\psi(\varepsilon y') - \frac{1}{\varepsilon}\psi(\varepsilon \xi')\right)\frac{\partial \psi}{\partial y_j}(\varepsilon y')      \\
& - \frac{\partial U}{\partial y_N}\left(y' - \xi', \frac{1}{\varepsilon}\psi(\varepsilon y')-\frac{1}{\varepsilon}\psi(\varepsilon \xi')\right) \Bigg] \frac{\partial U}{\partial y_i}\left(y'- \xi',\frac{1}{\varepsilon}\psi(\varepsilon y') - \frac{1}{\varepsilon}\psi(\varepsilon \xi')\right) dy' .
\end{split} \label{eq55}
\end{align}
By Taylor's Theorem we can write
\begin{align*}
\frac{\partial U}{\partial y_j}\left(y'- \xi',\frac{1}{\varepsilon}\psi(\varepsilon y') - \frac{1}{\varepsilon}\psi(\varepsilon \xi')\right) &= \frac{\partial U}{\partial y_j}\left(y'- \xi',0\right) + O(\varepsilon^{\alpha}), \,\,\,\ j=1, \dots, N-1 \\
\frac{\partial U}{\partial y_N}\left(y'- \xi',\frac{1}{\varepsilon}\psi(\varepsilon y') - \frac{1}{\varepsilon}\psi(\varepsilon \xi')\right) &= \frac{\partial U}{\partial y_N}\left(y'- \xi',0\right)\\
+&\frac{\partial^2U}{\partial y_N^2}(y'-\xi',0)\left(\frac{1}{\varepsilon}\psi(\varepsilon y')- \frac{1}{\varepsilon}\psi(\varepsilon \xi')\right) +  O(\varepsilon^{2 \alpha})
\end{align*}
By \eqref{eq53}, \eqref{eq54}, \eqref{eq55} and the expansions above we infer
\begin{equation}
\begin{split}
\partial_{\xi_i} \Phi_{\varepsilon}(\xi) &= \int_{\abs{y'}< \frac{r_0}{\varepsilon}} \frac{\partial U}{\partial y_i}\left(y'- \xi',0\right) \Bigg( \sum_{j=1}^{N-1}\frac{\partial U}{\partial y_j}\left(y'- \xi',0\right)\frac{\partial \psi}{\partial y_j}(\varepsilon y')  \\
&  - \frac{\partial^2 U}{\partial y_N^2}\left(y' - \xi', 0\right)\frac{1}{\varepsilon}\psi(\varepsilon y') \Bigg) dy'
 + O(\varepsilon^{(1+\gamma)\alpha}).
\end{split} \label{eq56}
\end{equation}

By \eqref{eq2}, \eqref{eq3} and \eqref{eq4} we have
\begin{align*}
\begin{split}
\partial_{\xi_i} \Phi_{\varepsilon}(\xi) &= \varepsilon^{\alpha} \Bigg[\, \int_{\R^{N-1}} \frac{\partial U}{\partial y_i}\left(y'- \xi',0\right)\Bigg(\sum_{j=1}^{N-1}\frac{\partial U}{\partial y_j}\left(y'- \xi',0\right)\frac{\partial Q}{\partial y_j}(y')  \\
& - \frac{\partial^2 U}{\partial y_N^2}(y' - \xi', 0)Q(y') \Bigg) dy'
 +\,\, O(\varepsilon^{\beta - \alpha}) + O(\varepsilon^{\gamma \alpha}) \Bigg]
\end{split}
\\
\begin{split}
&= \varepsilon^{\alpha} \Bigg[\, \int_{\R^{N-1}} \sum_{j=1}^{N-1} \frac{\partial U}{\partial y_i}\left(y'- \xi',0\right)\frac{\partial U}{\partial y_j}\left(y'- \xi',0\right)\frac{\partial Q}{\partial y_j}(y') dy' \\
& - \int_{\R^{N-1}} \frac{\partial U}{\partial y_i}\left(y'- \xi',0\right) \frac{\partial^2 U}{\partial y_N^2}(y' - \xi', 0)Q(y') dy'
 +\,\, O(\varepsilon^{\beta - \alpha}) + O(\varepsilon^{\gamma \alpha}) \Bigg].
\end{split}
\end{align*}
Integrating by parts we get
\begin{align*}
\begin{split}
\partial_{\xi_i} \Phi_{\varepsilon}(\xi) = \varepsilon^{\alpha} &\Bigg[\, -\int_{\R^{N-1}} \sum_{j=1}^{N-1} \left( \frac{\partial^2 U}{\partial y_i\partial y_j}\frac{\partial U}{\partial y_j} + \frac{\partial U}{\partial y_i}\frac{\partial^2 U}{\partial y_j^2}\right)\left(y'- \xi',0\right) Q(y') dy' \\
& - \int_{\R^{N-1}} \left(\frac{\partial U}{\partial y_i} \frac{\partial^2 U}{\partial y_N^2}\right)(y' - \xi', 0)Q(y') dy'
 +\,\, O(\varepsilon^{\beta - \alpha}) + O(\varepsilon^{\gamma \alpha}) \Bigg]
\end{split} \\
\begin{split}
= \varepsilon^{\alpha} &\Bigg[\, -\int_{\R^{N-1}}\left( \frac{1}{2}\frac{\partial}{\partial y_i}\abs{\nabla U}^2 + \Delta U \frac{\partial U}{\partial y_i}\right)\left(y'- \xi',0\right) Q(y') dy' \\
&  +\,\, O(\varepsilon^{\beta - \alpha}) + O(\varepsilon^{\gamma \alpha}) \Bigg]
\end{split} \\
\begin{split}
= \varepsilon^{\alpha} &\Bigg[\, -\int_{\R^{N-1}}\left( \frac{1}{2}\frac{\partial}{\partial y_i}\abs{\nabla U}^2 + (U - U^p) \frac{\partial U}{\partial y_i}\right)\left(y'- \xi',0\right) Q(y') dy' \\
&  +\,\, O(\varepsilon^{\beta - \alpha}) + O(\varepsilon^{\gamma \alpha}) \Bigg]
\end{split} \\
\begin{split}
= \varepsilon^{\alpha} &\Bigg[\, -\int_{\R^{N-1}}\frac{\partial}{\partial y_i}\left( \frac{1}{2}\abs{\nabla U}^2 + \frac{1}{2}U^2 - \frac{1}{p+1}U^{p+1}\right)\left(y'- \xi',0\right) Q(y') dy' \\
&  +\,\, O(\varepsilon^{\beta - \alpha}) + O(\varepsilon^{\gamma \alpha}) \Bigg]
\end{split}
\end{align*}
and integrating by parts once more we obtain
\begin{align} \label{eq57}
\partial_{\xi_i} \Phi_{\varepsilon}(\xi) =
 \varepsilon^{\alpha}& \Bigg[\, \int_{\R^{N-1}}\left(\frac{1}{2}\abs{\nabla U}^2 + \frac{1}{2}U^2 - \frac{1}{p+1}U^{p+1} \right)(y',0)\frac{\partial Q}{\partial y_i}(y' + \xi') d y' \notag \\
 & +\,\, O(\varepsilon^{\beta - \alpha}) + O(\varepsilon^{\gamma \alpha}) \Bigg].
\end{align}
It is important to observe that the terms of order $O(\varepsilon^{\beta - \alpha})$ and $O(\varepsilon^{\gamma \alpha})$ go to zero uniformly with respect to $\xi' \in B_R(0)$.

Let $\xi_0' \in \Xi$. By definition of stable zero there exists
$\xi_{\varepsilon}' \rightarrow \xi_0'$ such that $\nabla
\Phi_{\varepsilon}(\xi_{\varepsilon}') = 0$ and consequently
$U_{\xi_{\varepsilon}} + w(\varepsilon, \xi_{\varepsilon})$ is a
solution of \eqref{eq6}, where $\xi_{\varepsilon}=
(\xi_{\varepsilon}', \frac{1}{\varepsilon}\psi(\varepsilon
\xi_{\varepsilon}')) \in \partial \Omega_{\varepsilon}$.

To finish the proof we have to show that two different stable zeroes generate two different solutions. Let $\xi_1 '$ and $\xi_2 '$ be two different stable zeroes of $\mathcal{L}$ and let $u_{1,\varepsilon}$ and $u_{2,\varepsilon}$ be the solutions generated by $\xi_1 '$ and $\xi_2 '$ respectively.

Using elliptic estimates, one can prove that the error term $w(\varepsilon,\xi_{\varepsilon})$ satisfies
\begin{equation}
\label{eq58}
\norm{w(\varepsilon,\xi_{\varepsilon})}_{L^{\infty}(\Omega_{\varepsilon})} \longrightarrow 0 \quad \text{as} \quad \varepsilon \to 0.
\end{equation}

We have
\begin{align*}
u_{1,\varepsilon} = U_{\xi_{\varepsilon}^1} + w(\varepsilon, \xi_{\varepsilon}^1)  \\
u_{2,\varepsilon} = U_{\xi_{\varepsilon}^2} + w(\varepsilon, \xi_{\varepsilon}^2)
\end{align*}
\noindent and $\xi_{\varepsilon}^1 \rightarrow (\xi_1' , 0)$, $\xi_{\varepsilon}^2 \rightarrow (\xi_2' , 0)$ as $\varepsilon \rightarrow 0$.

By \eqref{eq58}
\begin{align*}
&u_{1,\varepsilon}(\xi_{\varepsilon}^1) \longrightarrow U(0)  \\
&u_{2,\varepsilon}(\xi_{\varepsilon}^1) \longrightarrow U(\xi_1 ' - \xi_2 ' , 0) \neq U(0)
\end{align*}
\noindent which completes the proof.
\end{proof}

\begin{remark} \label{re10}
It is easy to prove that these solutions $U_{\xi_{\varepsilon}} + w(\varepsilon,\xi_{\varepsilon})$ are boundary single peak solutions, i.e., they have exactly one local maximum point in $\overline{\Omega}_{\varepsilon}$ which lies on  $\partial \Omega_{\varepsilon}$. See \cite[Lemma 4.2]{YYL} or the arguments used in \cite{NT1}.
\end{remark}

\section{Properties of boundary single peak solutions}\label{s5}

We start this section by deriving some Pohozaev-type identities
that will be useful later.

\begin{lemma} \label{le9}
Let $\phi \in C^2(\overline{\mathcal{D}})$ be a solution of
\begin{align}
\label{eq118}
\begin{cases}
  -\Delta \phi + \phi = \phi^p,  & \mathcal{D}\\
  \phi > 0,                      & \mathcal{D}\\
  \frac{\partial \phi}{\partial \nu} = 0,& \partial \mathcal{D}
\end{cases}
\end{align}
where $\nu = (\nu_1,\dots,\nu_N)$ is the unit outer normal vector field on $\partial \mathcal{D}$. Then
\begin{align}
&\int_{\partial \mathcal{D}} \left(\frac{\abs{\nabla \phi}^2}{2} + \frac{\phi^2}{2}  - \frac{\phi^{p+1}}{p+1} \right)\nu_j \,\,d \sigma =0 \label{eq101} \\
&\int_{\partial \mathcal{D}} \left(\frac{\abs{\nabla \phi}^2}{2} + \frac{\phi^2}{2}  - \frac{\phi^{p+1}}{p+1} \right)y_j \nu_i  \,d\,\sigma =
\int_{\partial \mathcal{D}} \left(\frac{\abs{\nabla \phi}^2}{2} + \frac{\phi^2}{2}  - \frac{\phi^{p+1}}{p+1} \right)y_i \nu_j \,d\,\sigma   \label{eq103}
\end{align}
for all $i,j \in \left\{1,\dots,N \right\}$.
\end{lemma}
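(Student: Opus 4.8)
The plan is to derive both identities from the classical Pohozaev-Rellich machinery, integrating the PDE against suitable multipliers and then using the Neumann condition $\partial\phi/\partial\nu=0$ to kill all boundary terms except those explicitly written. For \eqref{eq101} I would multiply the equation $-\Delta\phi+\phi=\phi^p$ by $\partial_j\phi$ and integrate over $\mathcal D$. For \eqref{eq103} I would multiply by the anisotropic generator of translations-plus-something, namely $y_j\,\partial_i\phi$ (and separately $y_i\,\partial_j\phi$), integrate, and compare.

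\textbf{Step 1 (the $\nu_j$ identity).} Multiply \eqref{eq118} by $\partial_j\phi$ and integrate over $\mathcal D$. Integrating $-\Delta\phi\,\partial_j\phi$ by parts gives $\int_{\mathcal D}\nabla\phi\cdot\nabla\partial_j\phi - \int_{\partial\mathcal D}\frac{\partial\phi}{\partial\nu}\partial_j\phi\,d\sigma$; the boundary term vanishes by the Neumann condition, and the interior term equals $\tfrac12\int_{\mathcal D}\partial_j|\nabla\phi|^2 = \tfrac12\int_{\partial\mathcal D}|\nabla\phi|^2\nu_j\,d\sigma$. Similarly $\int_{\mathcal D}\phi\,\partial_j\phi = \tfrac12\int_{\partial\mathcal D}\phi^2\nu_j\,d\sigma$ and $\int_{\mathcal D}\phi^p\,\partial_j\phi = \tfrac1{p+1}\int_{\partial\mathcal D}\phi^{p+1}\nu_j\,d\sigma$. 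Collecting the three terms yields \eqref{eq101}.

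\textbf{Step 2 (the symmetry identity).} Multiply \eqref{eq118} by $y_j\,\partial_i\phi$ and integrate over $\mathcal D$. The lower-order terms give, after one integration by parts, $\int_{\mathcal D}(\tfrac12\phi^2 - \tfrac1{p+1}\phi^{p+1})_{,i}\,y_j = -\tfrac12\delta_{ij}\int_{\mathcal D}(\phi^2 - \tfrac{2}{p+1}\phi^{p+1}) + \int_{\partial\mathcal D}(\tfrac{\phi^2}{2}-\tfrac{\phi^{p+1}}{p+1})y_j\nu_i\,d\sigma$. For the Laplacian term, the standard Rellich computation gives $-\int_{\mathcal D}\Delta\phi\,y_j\partial_i\phi = \int_{\mathcal D}\big(\partial_i\phi\,\partial_j\phi + y_j\,\partial_k\phi\,\partial_k\partial_i\phi\big) - \int_{\partial\mathcal D}\frac{\partial\phi}{\partial\nu}\,y_j\partial_i\phi\,d\sigma$; the boundary term again vanishes by Neumann, and $\int_{\mathcal D}y_j\,\partial_k\phi\,\partial_k\partial_i\phi = \tfrac12\int_{\mathcal D}y_j\,\partial_i|\nabla\phi|^2 = -\tfrac12\delta_{ij}\int_{\mathcal D}|\nabla\phi|^2 + \tfrac12\int_{\partial\mathcal D}|\nabla\phi|^2 y_j\nu_i\,d\sigma$. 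Assembling everything, the bulk terms organize into a multiple of $\delta_{ij}$ plus $\int_{\mathcal D}\partial_i\phi\,\partial_j\phi$ plus the boundary integral $\int_{\partial\mathcal D}\big(\tfrac{|\nabla\phi|^2}{2}+\tfrac{\phi^2}{2}-\tfrac{\phi^{p+1}}{p+1}\big)y_j\nu_i\,d\sigma$, all equal to zero. Doing the same with $i$ and $j$ interchanged and subtracting, the $\delta_{ij}$-terms and the symmetric $\int_{\mathcal D}\partial_i\phi\,\partial_j\phi$ term cancel, leaving precisely \eqref{eq103}.

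\textbf{Main obstacle.} The computation is entirely routine, so the only real care needed is bookkeeping: making sure every boundary term produced by the integrations by parts that is \emph{not} of the desired form actually carries a factor $\partial\phi/\partial\nu$ (so that Neumann annihilates it), and verifying that the non-boundary bulk terms in the $y_j\partial_i\phi$ and $y_i\partial_j\phi$ versions are genuinely symmetric in $i\leftrightarrow j$ so they drop out upon subtraction. The regularity $\phi\in C^2(\overline{\mathcal D})$ is exactly what licenses all these integrations by parts up to the boundary.
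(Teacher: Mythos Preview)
Your proof is correct and follows essentially the same approach as the paper: test the equation against $\partial_j\phi$ for \eqref{eq101} and against $y_j\,\partial_i\phi$ for \eqref{eq103}, then integrate by parts using the Neumann condition. The paper simply restricts to $i\neq j$ (the case $i=j$ being trivial) and records that each side of \eqref{eq103} equals $-\int_{\mathcal D}\partial_i\phi\,\partial_j\phi$, which is precisely the symmetric bulk term you identify before subtracting.
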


\begin{proof}
Testing \eqref{eq118} with the function $\frac{\partial \phi}{\partial y_j}$ we obtain \eqref{eq101}, and testing \eqref{eq118} with $\frac{\partial \phi}{\partial y_i} y_j$ where $i \neq j$, we have
\begin{align*}
\int_{\mathcal{D}} \left(\nabla  \phi \cdot \nabla \left(\frac{\partial \phi}{\partial y_i}y_j\right) + \phi \frac{\partial \phi}{\partial y_i}y_j  - \phi^p \frac{\partial \phi}{\partial y_i}y_j \right) \,d y =0.
\end{align*}
Since $\nabla \left(\frac{\partial \phi}{\partial y_i}y_j\right) = \nabla \left(\frac{\partial \phi}{\partial y_i}\right)y_j +\frac{\partial \phi}{\partial y_i} e_j$, where $e_j=(0,\dots,1,\dots,0)$, we get
\begin{align*}
\int_{\mathcal{D}} \left(\nabla  \phi \cdot \nabla \left(\frac{\partial \phi}{\partial y_i}\right) + \phi \frac{\partial \phi}{\partial y_i}  - \phi^p \frac{\partial \phi}{\partial y_i} \right)y_j \,d y  + \int_{\mathcal{D}} \frac{\partial \phi}{\partial y_i} \frac{\partial \phi}{\partial y_j} dy =0 \\
\int_{\mathcal{D}} \frac{\partial}{\partial y_i}\left(\frac{\abs{\nabla \phi}^2}{2} + \frac{\phi^2}{2}  - \frac{\phi^{p+1}}{p+1} \right)y_j \,\,dy = - \int_{\mathcal{D}} \frac{\partial \phi}{\partial y_i} \frac{\partial \phi}{\partial y_j} dy  \\
\int_{\partial \mathcal{D}} \left(\frac{\abs{\nabla \phi}^2}{2} + \frac{\phi^2}{2}  - \frac{\phi^{p+1}}{p+1} \right)y_j \nu_i  \,d\,\sigma = - \int_{\mathcal{D}} \frac{\partial \phi}{\partial y_i} \frac{\partial \phi}{\partial y_j} dy
\end{align*}
which proves \eqref{eq103}.
\end{proof}

\begin{proposition}
\label{prop11}
Let $u_{\varepsilon}$ be a family of boundary single peak solutions of \eqref{eq1} concentrating at $0$ and
let $v_{\varepsilon}(x) = u_{\varepsilon}(\varepsilon x)$ be the corresponding solution of \eqref{eq6}. Then
\begin{itemize}
\item[(i)] $v_{\varepsilon}(x) = U(x-\frac{P_{\varepsilon}}{\varepsilon}) + \omega_{\varepsilon}(x)$, where
$\norm{\omega_{\varepsilon}}_{C^1(\overline{\Omega}_{\varepsilon})} \rightarrow 0$ as $\varepsilon \rightarrow 0$,
\item[(ii)] $u_{\varepsilon}(x) \leqslant C \exp\left(-\lambda \frac{\abs{x-P_{\varepsilon}}}{\varepsilon}\right)$,  for some $\lambda > 0$,
\item[(iii)] $\left| \nabla u_{\varepsilon}(x) \right| \leqslant \frac{C}{\varepsilon}
\exp\left(-\lambda
\frac{\abs{x-P_{\varepsilon}}}{\varepsilon}\right)$.
\end{itemize}
If we suppose also that the domain satisfies the conditions \eqref{eq2} - \eqref{eq97} then we have the crucial estimate
\begin{equation}
\label{eq97b}
\abs{P_{\varepsilon}} =O(\varepsilon).
\end{equation}
\end{proposition}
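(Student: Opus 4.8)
The plan is to read (i)--(iii) off the standard blow‑up analysis of concentrating solutions, and to derive \eqref{eq97b} from the Pohozaev identities of Lemma \ref{le9} ``localised'' at the peak, exploiting the special form \eqref{eq2}--\eqref{eq97} of $\partial\Omega$.

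For (i)--(iii) I would argue as in \cite{NT1,NT2}. Blowing up $v_\varepsilon$ at its unique maximum $P_\varepsilon/\varepsilon$ and using the energy bound of Definition \ref{def1} gives $v_\varepsilon(\cdot+P_\varepsilon/\varepsilon)\to U$ in $C^1_{\mathrm{loc}}$; the energy bound together with elliptic estimates upgrades this to convergence of the difference in the full $C^1(\overline\Omega_\varepsilon)$ norm, which is (i). Since $v_\varepsilon$ is uniformly small away from the peak, there $-\Delta v_\varepsilon+\tfrac12 v_\varepsilon\le 0$, and comparison with a multiple of $\exp(-\tfrac{\lambda}{2}\,\mathrm{dist}(\cdot,P_\varepsilon/\varepsilon))$ yields the pointwise decay (ii); rescaling to $\Omega$ and applying interior and boundary $W^{2,q}$ estimates on unit balls (using (ii)) gives (iii).

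Now assume also \eqref{eq2}--\eqref{eq97} and suppose, for contradiction, that $|P_\varepsilon|\neq O(\varepsilon)$; along a subsequence $|\xi_\varepsilon'|\to\infty$, where $\xi_\varepsilon=P_\varepsilon/\varepsilon=(\xi_\varepsilon',\tfrac1\varepsilon\psi(\varepsilon\xi_\varepsilon'))$. Since $\varepsilon\xi_\varepsilon'=P_\varepsilon'\to0$, the peak stays in the coordinate chart and at distance $\gtrsim r_0/(2\varepsilon)$ from $\partial\Omega_\varepsilon\setminus B_{r_0/\varepsilon}(0)$. Apply \eqref{eq101} with $\phi=v_\varepsilon$, $\mathcal D=\Omega_\varepsilon$ and $j=i\in\{1,\dots,N-1\}$; by (ii)--(iii) the part of the integral over $\partial\Omega_\varepsilon\setminus B_{r_0/\varepsilon}(0)$ is $O(e^{-c/\varepsilon})$, and parametrising the rest by $y'\mapsto(y',\tfrac1\varepsilon\psi(\varepsilon y'))$ (so $\nu_i\,d\sigma=\partial_i\psi(\varepsilon y')\,dy'$) one obtains an identity of exactly the type that produced \eqref{eq57}. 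Inserting $v_\varepsilon=U(\cdot-\xi_\varepsilon)+\omega_\varepsilon$ from (i), writing $\partial_i\psi=\varepsilon^\alpha\partial_iQ(\,\cdot\,)+\partial_iR(\varepsilon\,\cdot\,)$, using that $U$ is radial (so $\partial_{y_N}U$ and $\partial_{y_N}|\nabla U|^2$ vanish on $\{y_N=0\}$), and using the two identities $\int_{\R^{N-1}}W_U(y',0)\,dy'=0$, where $W_U(\,\cdot\,,0):=(\tfrac12|\nabla U|^2+\tfrac12U^2-\tfrac1{p+1}U^{p+1})(\,\cdot\,,0)$ — this follows by testing the equation for $U$ with $\partial_{y_N}U$ over $\{y_N>0\}$ — and $\int_{\R^{N-1}}y_j'W_U(y',0)\,dy'=0$ (radial symmetry), the identity collapses, after the shift $y'\mapsto y'+\xi_\varepsilon'$, to
\[
\varepsilon^{\alpha}\,\mathcal L_i(\xi_\varepsilon')=E_\varepsilon,
\]
with $\mathcal L$ precisely as in \eqref{eq49}. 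Taylor–expanding $\partial_iQ(y'+\xi')$ about $\xi'$ and using the homogeneity of $Q$ (so $D^k\partial_iQ(\xi')$ is $(\alpha-k)$‑homogeneous) together with the vanishing of the zeroth and first moments of $W_U(\,\cdot\,,0)$ gives, for $|\xi'|$ large,
\[
\mathcal L_i(\xi')=\tfrac{b}{2}\,\partial_i(\Delta Q)(\xi')+O\bigl(|\xi'|^{\alpha-4}\bigr),\qquad b=\tfrac1{N-1}\int_{\R^{N-1}}|y'|^2\,W_U(y',0)\,dy'\neq0
\]
(the nonvanishing of $b$ is a separate moment computation for $U$). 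Since $\Delta Q$ is $(\alpha-1)$‑homogeneous, \eqref{eq97} and compactness of $S^{N-2}$ force $|\nabla\Delta Q(\xi')|\ge c_0|\xi'|^{\alpha-2}$, hence $\varepsilon^{\alpha}|\mathcal L(\xi_\varepsilon')|\gtrsim \varepsilon^{\alpha}|\xi_\varepsilon'|^{\alpha-2}=\varepsilon^{2}|P_\varepsilon'|^{\alpha-2}$.

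The hard part — and the point on which the whole estimate turns — is to show $E_\varepsilon=o\bigl(\varepsilon^{2}|P_\varepsilon'|^{\alpha-2}\bigr)$ uniformly, i.e.\ that the curvature and nonlinear corrections are genuinely of lower order than $\varepsilon^{\alpha}\mathcal L_i(\xi_\varepsilon')$ even while $|\xi_\varepsilon'|\to\infty$; this yields the contradiction and hence $|\xi_\varepsilon'|\le C$, which is \eqref{eq97b}. I would split $E_\varepsilon$ as follows: (a) the $\partial_iR$‑contribution, which after moment cancellation is $O(\varepsilon^{2}|P_\varepsilon'|^{\beta-2})=o(\varepsilon^{2}|P_\varepsilon'|^{\alpha-2})$ because $\beta>\alpha$; (b) the error of replacing the curved boundary by $\{y_N=0\}$ in the argument of $U$, which is \emph{quadratic} in the height $\tfrac1\varepsilon(\psi(\varepsilon y')-\psi(\varepsilon\xi_\varepsilon'))=O(|P_\varepsilon'|^{\alpha})$ since $W_U(\,\cdot\,,0)$ is even in $y_N$; (c) the terms linear and superlinear in $\omega_\varepsilon$, controlled by the exponential tails (ii)--(iii) of $\omega_\varepsilon$ and the sharp bound $\norm{\omega_\varepsilon}\lesssim\norm{I_\varepsilon'(U_{\xi_\varepsilon})}\lesssim\varepsilon\,|P_\varepsilon'|^{\alpha-1}$, which comes from \eqref{eq44} together with the estimate (as in Lemma \ref{le3}) of $\partial_\nu U_{\xi_\varepsilon}$ on $\partial\Omega_\varepsilon$, whose leading part is $U'(|y-\xi_\varepsilon|)/|y-\xi_\varepsilon|$ times $(y-\xi_\varepsilon)\cdot\nu(y)=O\bigl(\varepsilon|P_\varepsilon'|^{\alpha-1}|y'-\xi_\varepsilon'|^{2}\bigr)$. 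Bounds (b) and (c) are $o\bigl(\varepsilon^{2}|P_\varepsilon'|^{\alpha-2}\bigr)$ only once a polynomial improvement $|P_\varepsilon'|=O(\varepsilon^{\theta})$, $\theta>0$, of the mere $|P_\varepsilon'|\to0$ is available; I expect to obtain this by bootstrapping — a first, crude use of the displayed identity already forces some $\theta>0$, and reinjecting it improves $\theta$ until all of (a)--(c) are dominated by $\varepsilon^{\alpha}\mathcal L_i(\xi_\varepsilon')$ — and I expect this bootstrap, together with the verification that $b\neq0$, to be the technical core of the proof.
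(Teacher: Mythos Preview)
Your outline for (i)--(iii) is fine and matches the paper, which simply cites \cite{WW,W,LZ} for these standard facts.

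For \eqref{eq97b} your strategy diverges from the paper's and carries a real gap. You decompose $v_\varepsilon=U_{\xi_\varepsilon}+\omega_\varepsilon$ and kill the zeroth and first moments of $W_U(\,\cdot\,,0)$ by symmetry, leaving error terms (b)--(c) that depend on $\omega_\varepsilon$. To bound (c) you invoke $\|\omega_\varepsilon\|\lesssim\|I'_\varepsilon(U_{\xi_\varepsilon})\|$ via \eqref{eq44}, but \eqref{eq44} and all of Section~\ref{s3} are established only for $|\xi'|\le R$ fixed, exactly the regime you have left in the contradiction scenario $|\xi'_\varepsilon|\to\infty$; moreover the $\omega_\varepsilon$ of part~(i) is not known to lie in $W=(T\mathcal Z)^\perp$, so \eqref{eq44} does not even formally apply. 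You then defer to a bootstrap you do not carry out, and it is not clear the first pass yields any polynomial rate: with only $\|\omega_\varepsilon\|_{C^1}=o(1)$ from (i), the $\omega_\varepsilon$-contribution to the Pohozaev identity is $o(1)\cdot|P'_\varepsilon|^\alpha$ (no zeroth-moment cancellation is available for the $\omega_\varepsilon$-term), and in the regime $|P'_\varepsilon|\gg\varepsilon$ this is \emph{not} dominated by the main term $\varepsilon^2|P'_\varepsilon|^{\alpha-2}$, so the bootstrap does not start.

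The paper avoids decomposing $v_\varepsilon$ altogether. It translates, setting $\widetilde v_n(y)=u_{\varepsilon_n}(\varepsilon_n y+P_n)$, and Taylor-expands $\partial_j\psi(\varepsilon_n y'+\xi'_n)$ about $\xi'_n$ to second order, obtaining $I_{1,n}+I_{2,n}+I_{3,n}=O(\varepsilon_n^3|\xi'_n|^{\alpha-3})$. The zeroth- and first-order terms are then killed by further Pohozaev identities applied to the \emph{actual} solution $\widetilde v_n$: $I_{1,n}$ is exponentially small by \eqref{eq101} with $j=N$ (the patch integral of the energy density equals minus its $\nu_N$-flux outside the patch), and---this is the step you miss---the first moments $a_{n,k}=\int W_{\widetilde v_n}\,y_k\,dy'$ are controlled via the \emph{second} identity \eqref{eq103} with $i=N$, $j=k$, which yields a self-referential relation $a_n=T_n\cdot a_n+o(\varepsilon_n)$ with $|T_n|\to0$, hence $|a_n|=o(\varepsilon_n)$ and $I_{2,n}=o(\varepsilon_n^2|\xi'_n|^{\alpha-1})$. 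No quantitative estimate on $\omega_\varepsilon$ is ever used. The second-order term $I_{3,n}$ then dominates, and passing to the limit with only (i)--(iii) and dominated convergence gives $\nabla\Delta Q(\zeta)=0$ for some $\zeta\in S^{N-2}$, contradicting \eqref{eq97}. In short: your instinct to exploit moment cancellations is right, but the paper obtains them for the exact solution through \eqref{eq101} and \eqref{eq103} rather than for $U$ alone, thereby sidestepping the $\omega_\varepsilon$-control that your approach cannot supply.
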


\begin{proof}
(i) follows by \cite[Theorem 6.1]{WW}, see also \cite{W}. To prove that ${\norm{\omega_{\varepsilon}}_{C^1(\overline{\Omega}_{\varepsilon})} \to 0}$ we use the exponential estimates (ii) and (iii).

(ii) and (iii) follow by \cite[Lemma 2.1]{LZ}.

Proof of \eqref{eq97b}.  First of all let us note that to prove \eqref{eq97b} it suffices to show that $\abs{\xi'_{\varepsilon}} = O(\varepsilon)$ because $\abs{\psi(\xi'_{\varepsilon})} \leqslant C \abs{\xi'_{\varepsilon}}^{\alpha + 1}$. By contradiction, assume that there exists a sequence $\varepsilon_n \rightarrow 0$ such that
\begin{equation}
\label{eq98}
\frac{\abs{\xi'_{\varepsilon_n}}}{\varepsilon_n} \longrightarrow \infty \qquad where \quad P_{\varepsilon_n} = (\xi'_{\varepsilon_n}, \psi(\xi'_{\varepsilon_n})).
\end{equation}
Setting $\xi'_n = \xi'_{\varepsilon_n}$ and $\widetilde{v}_n(y)= u_{\varepsilon_n}(\varepsilon_n y + P_n)$, then $\widetilde{v}_n$ satisfies
\begin{align}
\label{eq99}
\begin{cases}
  -\Delta \widetilde{v}_n + \widetilde{v}_n = \widetilde{v}_n^p,  &\widetilde{\Omega}_n \\
  \frac{\partial \widetilde{v}_n}{\partial \nu} = 0,& \partial \widetilde{\Omega}_n
\end{cases}
\end{align}
where $\widetilde{\Omega}_n = \Omega_{\varepsilon_n} - \frac{P_n}{\varepsilon_n}$. We have $0 \in \partial \widetilde{\Omega}_n$ $\forall \,\, n$ and $\widetilde{v}_n(0) = \max\limits_{\widetilde{\Omega}_n} \widetilde{v}_n$.


We write $\partial \widetilde{\Omega}_n=\left\{\left(y', \frac{1}{\varepsilon_n}\psi(\varepsilon_n y' + \xi'_n) - \frac{1}{\varepsilon_n} \psi(\xi'_n) \right)\left| \abs{y'} < \frac{r_0}{2 \varepsilon_n} \right.\right\}$ in a neighborhood of $0$.

By (ii) and (iii) we have
\begin{equation}
\label{eq100}
\widetilde{v}_n(y) + \abs{\nabla \widetilde{v}_n(y)} \leqslant C \text{e}^{-\lambda \abs{y}}.
\end{equation}

In a neighborhood of $0$ on $\partial \widetilde{\Omega}_n$ we have
\begin{equation}
\label{eq104}
\nu(y) = \frac{1}{\sqrt{1+ \abs{\nabla \psi(\varepsilon_n y' + \xi'_n)}^2}}(\nabla \psi(\varepsilon_n y' + \xi'_n), -1).
\end{equation}
Using \eqref{eq100} and \eqref{eq104}, we have that \eqref{eq101} for $\widetilde{v}_n$ becomes
\begin{equation}
\begin{split}
& \int_{\abs{y'} < \frac{r_0}{2 \varepsilon_n}} \left(\frac{\abs{\nabla \widetilde{v}_n}^2}{2}+
\frac{\widetilde{v}_n^2}{2}  - \frac{\widetilde{v}_n^{p+1}}{p+1}\right) \Big(y', \frac{1}
{\varepsilon_n} \big(\psi(\varepsilon_n y' + \xi'_n) - \psi(\xi'_n)\big) \Big) \\
& \frac{\partial \psi}{\partial y_j}(\varepsilon_n y' + \xi'_n) dy' = O\left(e^{- \frac{\tau_0}
{\varepsilon_n}} \right).
\end{split} \label{eq105}
\end{equation}
By Taylor's Theorem we have
\begin{align}
 \frac{\partial \psi}{\partial y_j}(\varepsilon_n y' + \xi'_n) =&  \frac{\partial \psi}{\partial y_j}(\xi'_n)+ \nabla \frac{\partial \psi}{\partial y_j}(\xi'_n) \cdot \varepsilon_n y' + \frac{1}{2} D^2  \frac{\partial \psi}{\partial y_j}(\xi'_n)[\varepsilon_n y', \varepsilon_n y'] \notag \\
 &+ O\left(\varepsilon_n^3 \abs{\xi'_n}^{\alpha-3} \abs{y'}^3 \right) \label{eq106}
\end{align}
which substituted into equation \eqref{eq105} gives
\begin{equation}
\label{eq107}
I_{1,n} + I_{2,n} + I_{3,n} = O(\varepsilon_n^3 \abs{\xi'_n}^{\alpha-3}).
\end{equation}
By \eqref{eq101} for $\widetilde{v}_n$ with $j=N$ and by \eqref{eq100} it follows that

\begin{equation}
\begin{split}
 I_{1,n} &=\frac{\partial \psi}{\partial
y_j}(\xi'_n)\int_{\abs{y'} < \frac{r_0}{2 \varepsilon_n}}
\left(\frac{\abs{\nabla \widetilde{v}_n}^2}{2}+ \frac{\widetilde{v}_n^2}{2}
 - \frac{\widetilde{v}_n^{p+1}}{p+1}\right)
\Big(y', \frac{1}
{\varepsilon_n} \big(\psi(\varepsilon_n y' + \xi'_n) - \psi(\xi'_n)\big) \Big)dy'\\
&=\frac{\partial \psi}{\partial y_j}(\xi'_n)\int_{\partial\widetilde{\Omega}_n\setminus\left\{\abs{y'}<
\frac{r_0}{2 \varepsilon_n}\right\}} \left(\frac{\abs{\nabla \widetilde{v}_n}^2}{2}+ \frac{\widetilde{v}_n^2}{2}
 - \frac{\widetilde{v}_n^{p+1}}{p+1}\right) \nu_N d\sigma\\
& =O\left(e^{- \frac{\tau_0} {\varepsilon_n}} \right).
\label{eq108}
\end{split}
\end{equation}
We claim that
\begin{equation}
\label{eq109}
I_{2,n} = o(\abs{\xi'_n}^{\alpha-1} \varepsilon_n^2 ).
\end{equation}
Let us assume for a moment that \eqref{eq109} holds. Then the assumption \eqref{eq98} implies that
\begin{equation}
\label{eq109b}
I_{1,n} + I_{2,n} + I_{3,n}=
o(\varepsilon_n^2\abs{\xi'_n}^{\alpha-2}),
\end{equation}
and then
\begin{equation}
\begin{split}
&\int_{\abs{y'} < \frac{r_0}{2 \varepsilon_n}} \left(\frac{\abs{\nabla \widetilde{v}_n}^2}{2} + \frac{\widetilde{v}_n^2}{2} - \frac{\widetilde{v}_n^{p+1}}{p+1} \right) \Big(y', \frac{1}
{\varepsilon_n} \big(\psi(\varepsilon_n y' + \xi'_n) - \psi(\xi'_n)\big) \Big)  \\
& D^2  \frac{\partial Q}{\partial y_j}(\xi'_n)[\varepsilon_n y',
\varepsilon_n y']dy' = o\left(\varepsilon_n^2
\abs{\xi'_n}^{\alpha-2}\right)
\end{split}\label{eq110}
\end{equation}
where we have used \eqref{eq2}-\eqref{eq4}.

Finally we rewrite \eqref{eq110} as
\begin{equation*}
\begin{split}
&\int_{\abs{y'} < \frac{r_0}{2 \varepsilon_n}} \left(\frac{\abs{\nabla \widetilde{v}_n}^2}{2} + \frac{\widetilde{v}_n^2}{2}  - \frac{\widetilde{v}_n^{p+1}}{p+1} \right) \Big(y', \frac{1}
{\varepsilon_n} \big(\psi(\varepsilon_n y' + \xi'_n) - \psi(\xi'_n)\big) \Big) \\
&D^2  \frac{\partial Q}{\partial
y_j}\left(\frac{\xi'_n}{\abs{\xi'_n}}\right)[y',y'] dy'=o(1)
\end{split}
\end{equation*}
and passing to the limit we get
\begin{equation}
\label{eq111}
\int_{\R^{N-1}} \left(\frac{1}{2}\abs{\nabla U}^2 + \frac{1}{2} U^2 - \frac{1}{p+1}\,U^{p+1}\right) \left(y',0\right)D^2  \frac{\partial Q}{\partial y_j}(\zeta)[y',y'] dy' =0
\end{equation}
where
\begin{equation*}
\lim_{n \rightarrow \infty} \frac{\xi'_n}{\abs{\xi'_n}}= \zeta \in
S^{N-2}.
\end{equation*}
Moreover, since for $k=1,\dots,N-1$,
\begin{align*}
\int_{\R^{N-1}} \left(\frac{1}{2}\abs{\nabla U}^2 + \frac{1}{2} U^2 - \frac{1}{p+1}\,U^{p+1}\right) \left(y',0\right)y_k^2 dy' =&\\
 \frac{1}{N-1}\int_{\R^{N-1}} \left(\frac{1}{2}\abs{\nabla U}^2 + \frac{1}{2} U^2 - \frac{1}{p+1}
 \,U^{p+1}\right) \left(y',0\right)\abs{y'}^2 dy'& > 0,
\end{align*}
we get from \eqref{eq111} that
\begin{equation*}
\Delta \frac{\partial Q}{\partial y_j}(\zeta) =0,\,\, \text{for}
\,\,\,\, j= 1,\dots,N-1,
\end{equation*}
which gives a contradiction to \eqref{eq97}.

It remains to prove \eqref{eq109}. Set
\begin{equation*}
a_{n,k} = \int_{\abs{y'} < \frac{r_0}{2 \varepsilon_n}} \left(\frac{\abs{\nabla \widetilde{v}_n}^2}{2} + \frac{\widetilde{v}_n^2}{2}  - \frac{\widetilde{v}_n^{p+1}}{p+1} \right) \Big(y', \frac{1}
{\varepsilon_n} \big(\psi(\varepsilon_n y' + \xi'_n) - \psi(\xi'_n)\big) \Big)y_k dy'
\end{equation*}
for $k=1,\dots,N-1$, thus
\begin{equation}
\label{eq112} I_{2,n}= \varepsilon_n \nabla \frac{\partial
\psi}{\partial y_j}(\xi'_n) \cdot a_n
\end{equation}
and by \eqref{eq112}, to prove the claim it suffices to prove that
$\abs{a_n} = o(\varepsilon_n)$. In order to do this, we use
\eqref{eq103} for $\widetilde{v}_n$ with $i=N$ and $j=k \in \left\{1,\dots,N-1 \right\}$
to obtain
\begin{align}
- a_{n,k} = &\int_{\abs{y'} < \frac{r_0}{2 \varepsilon_n}} \left(\frac{\abs{\nabla \widetilde{v}_n}^2}{2} + \frac{\widetilde{v}_n^2}{2}  - \frac{\widetilde{v}_n^{p+1}}{p+1}\right) \Big(y', \frac{1}{\varepsilon_n} \big(\psi(\varepsilon_n y' + \xi'_n) - \psi(\xi'_n)\big) \Big) \notag\\
&\left[ \frac{1}{\varepsilon_n} \psi(\varepsilon_n y' + \xi'_n) - \frac{1}{\varepsilon_n} \psi(\xi'_n)\right] \frac{\partial \psi}{\partial y_k}(\varepsilon_n y'+ \xi'_n) dy' +\, O\left(e^{- \frac{\tau_0}{\varepsilon_n}} \right)\label{eq113}.
\end{align}
By Taylor's Theorem
\begin{align}
\begin {split}
a_{n,k} =  - &\int_{\abs{y'} < \frac{r_0}{2 \varepsilon_n}} \left(\frac{\abs{\nabla \widetilde{v}_n}^2}{2} + \frac{\widetilde{v}_n^2}{2}  - \frac{\widetilde{v}_n^{p+1}}{p+1} \right) \Big(y', \frac{1}{\varepsilon_n} \big(\psi(\varepsilon_n y' + \xi'_n) - \psi(\xi'_n)\big) \Big) \notag\\
&\left[\nabla \psi(\xi'_n) \cdot y' + o(\varepsilon_n)\right] \left[\frac{\partial \psi}{\partial y_k}( \xi'_n) + o(\varepsilon_n)\right] dy' +\, O\left(e^{- \frac{\tau_0}{\varepsilon_n}} \right)
\end{split} \notag \\
\begin {split}
 = - &\int_{\abs{y'} < \frac{r_0}{2 \varepsilon_n}} \left(\frac{\abs{\nabla \widetilde{v}_n}^2}{2} + \frac{\widetilde{v}_n^2}{2}  - \frac{\widetilde{v}_n^{p+1}}{p+1} \right) \Big(y', \frac{1}{\varepsilon_n} \big(\psi(\varepsilon_n y' + \xi'_n) - \psi(\xi'_n)\big) \Big)\notag \\
&\frac{\partial \psi}{\partial y_k}( \xi'_n) \nabla \psi (\xi'_n) \cdot y' dy'+ o(\varepsilon_n)
\end{split} \notag\\
 = & \,\,\,\,T_{n,k} \cdot a_n + o(\varepsilon_n)
 \label{eq114}
\end{align}
where $T_{n,k} = - \frac{\partial \psi}{\partial y_k} (\xi'_n)\nabla \psi(\xi'_n)$ and $\abs{T_{n,k}} \longrightarrow 0$ as $n \rightarrow \infty$ for $k=1,\dots,N-1$.
By \eqref{eq114} we infer
\begin{align*}
&\abs{a_n} \leqslant \sum_{k=1}^{N-1}\abs{T_{n,k}} \abs{a_n} + o(\varepsilon_n)\notag \\
&\abs{a_n} \leqslant \frac{o(\varepsilon_n)}{1 -
\sum_{k=1}^{N-1}\abs{T_{n,k}} } = o(\varepsilon_n).
\end{align*}
The proof is now complete.
\end{proof}

\begin{remark}
In general, the assumption \eqref{eq97} can not be completely
removed. Otherwise \eqref{eq97b} might not hold. Here we give an
example.

Let $\Omega = B_1(0)$ in $\R^2$ and $u_{\varepsilon}$ be a family
of boundary single peak solutions of \eqref{eq1} with peak
$P_{\varepsilon}$ and $P_{\varepsilon} \rightarrow P_0$; we can
define $\widetilde{u}_{\varepsilon}(x) =
u_{\varepsilon}(\Theta_{\varepsilon}x)$ where
$\Theta_{\varepsilon}: \R^2 \rightarrow \R^2$ is a rotation by an
angle $\theta_{\varepsilon}$, and $\theta_{\varepsilon} \to 0$ as
$\varepsilon \rightarrow 0$.

Then $\widetilde{u}_{\varepsilon}$ is also a family of boundary
single peak solutions of \eqref{eq1} with peak
$\widetilde{P}_{\varepsilon}=
\Theta_{\varepsilon}^{-1}P_{\varepsilon}\longrightarrow P_0$. But we
can choose $\theta_{\varepsilon} \rightarrow 0$ in such a way that
$\frac{\abs{\widetilde{P}_{\varepsilon} - P_0}}{\varepsilon}
\longrightarrow \infty$.
\end{remark}

\begin{proposition}
\label{prop12}
Let $u_{\varepsilon}$ be a family of boundary single peak solutions of \eqref{eq1} and let $\varepsilon_n$ be a sequence which goes to zero. Suppose that the domain satisfies conditions \eqref{eq2} - \eqref{eq97}. Then, up to a subsequence, if $P_n$ denotes the peak of the solution $u_{\varepsilon_n}$ we have
\begin{equation}
\label{eq70}
\frac{P_n}{\varepsilon_n} \longrightarrow (\xi',0)
\end{equation}
where $\xi' \in \R^{N-1}$ satisfies $\mathcal{L}(\xi')=0$.
\end{proposition}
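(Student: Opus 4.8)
The plan is to feed the asymptotics of Proposition \ref{prop11} into the Pohozaev identity \eqref{eq101}. First, the estimate \eqref{eq97b} gives $\abs{P_n}=O(\varepsilon_n)$; writing $P_n=(\xi_n',\psi(\xi_n'))$ this yields $\abs{\xi_n'/\varepsilon_n}\leqslant C$, so along a subsequence $\xi_n'/\varepsilon_n\to\xi'$ for some $\xi'\in\R^{N-1}$, and since $\abs{\psi(\xi_n')}\leqslant C\abs{\xi_n'}^{\alpha+1}=O(\varepsilon_n^{\alpha+1})$ with $\alpha\geqslant 3$, we get $\psi(\xi_n')/\varepsilon_n\to 0$, i.e.\ $P_n/\varepsilon_n\to(\xi',0)$. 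It remains to show $\mathcal{L}(\xi')=0$.

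Put $v_n(x)=u_{\varepsilon_n}(\varepsilon_n x)$, which solves \eqref{eq6} on $\Omega_{\varepsilon_n}$, and apply \eqref{eq101} to $\phi=v_n$ with $j\in\{1,\dots,N-1\}$. The rescaled exponential estimates (ii)--(iii) of Proposition \ref{prop11}, together with the boundedness of $P_n/\varepsilon_n$, give $\abs{\nabla v_n(y)}+\abs{v_n(y)}\leqslant C\,\text{e}^{-\lambda\abs{y}}$ uniformly in $n$; hence the portion of $\partial\Omega_{\varepsilon_n}$ outside $B_{r_0/\varepsilon_n}(0)$ contributes $O(\text{e}^{-\tau_0/\varepsilon_n})$, and on the graph part, parametrized by $(y',\tfrac{1}{\varepsilon_n}\psi(\varepsilon_n y'))$ with $\abs{y'}<r_0/\varepsilon_n$, one has $\nu_j\,d\sigma=\partial_{y_j}\psi(\varepsilon_n y')\,dy'$. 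Splitting $\psi=Q+R$ and using that, by the homogeneity \eqref{eq3}, $\partial_{y_j}Q$ is homogeneous of degree $\alpha$ (so $\partial_{y_j}Q(\varepsilon_n y')=\varepsilon_n^{\alpha}\,\partial_{y_j}Q(y')$) while $\abs{\partial_{y_j}R(\varepsilon_n y')}\leqslant C\varepsilon_n^{\beta}\abs{y'}^{\beta}$ by \eqref{eq4}, and then dividing by $\varepsilon_n^{\alpha}$, we arrive at
\[
\int_{\abs{y'}<r_0/\varepsilon_n} E_n\!\Big(y',\tfrac{1}{\varepsilon_n}\psi(\varepsilon_n y')\Big)\,\frac{\partial Q}{\partial y_j}(y')\,dy' \;=\; O(\varepsilon_n^{\beta-\alpha})+O\big(\varepsilon_n^{-\alpha}\text{e}^{-\tau_0/\varepsilon_n}\big)\;\longrightarrow\;0,
\]
where $E_n=\tfrac12\abs{\nabla v_n}^2+\tfrac12 v_n^2-\tfrac{1}{p+1}v_n^{p+1}$; the uniform exponential bound is precisely what makes the $R$-contribution and the exponential tails negligible.

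To conclude, pass to the limit. By Proposition \ref{prop11}(i), $v_n\to U(\cdot-(\xi',0))$ in $C^1_{\mathrm{loc}}(\R^N)$, and for each fixed $y'$ one has $\tfrac{1}{\varepsilon_n}\psi(\varepsilon_n y')=O(\varepsilon_n^{\alpha}\abs{y'}^{\alpha+1})\to 0$, so the integrand above converges pointwise to $\big(\tfrac12\abs{\nabla U}^2+\tfrac12 U^2-\tfrac{1}{p+1}U^{p+1}\big)(y'-\xi',0)\,\partial_{y_j}Q(y')$, which is dominated by $C\,\text{e}^{-\lambda\abs{y'}}\abs{y'}^{\alpha}\in L^1(\R^{N-1})$. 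Dominated convergence then gives $\int_{\R^{N-1}}\big(\tfrac12\abs{\nabla U}^2+\tfrac12 U^2-\tfrac{1}{p+1}U^{p+1}\big)(y'-\xi',0)\,\partial_{y_j}Q(y')\,dy'=0$, and the change of variables $z'=y'-\xi'$ recognizes this as the $j$-th component of $\mathcal{L}(\xi')$ (cf.\ \eqref{eq49}); since $j$ was arbitrary, $\mathcal{L}(\xi')=0$. The main obstacle is this limit passage: one must verify that the curvature-type correction coming from the height $\tfrac{1}{\varepsilon_n}\psi(\varepsilon_n y')$ inside the argument of $E_n$, and the $R$-term, are genuinely of lower order after division by $\varepsilon_n^{\alpha}$, and that the decay in (ii)--(iii) is uniform in $n$ — both being consequences of the estimates in Proposition \ref{prop11} and the computations of Lemma \ref{le3}.
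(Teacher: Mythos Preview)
Your proof is correct and follows essentially the same route as the paper: both arguments use the bound \eqref{eq97b} from Proposition \ref{prop11} to extract the subsequential limit \eqref{eq70}, then feed the rescaled solution $v_n$ into the Pohozaev identity \eqref{eq101}, localize to the graph portion of $\partial\Omega_{\varepsilon_n}$ via the uniform exponential decay, split $\psi=Q+R$ using the homogeneity of $Q$ and the bound \eqref{eq4} on $R$, divide by $\varepsilon_n^{\alpha}$, and pass to the limit by dominated convergence to recover $\mathcal{L}(\xi')=0$.
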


\begin{proof}
Let us write ${P_n = (\xi_n',\psi(\xi_n'))}$, it follows from
\eqref{eq97b} that
${\abs{\xi_n'}=O(\varepsilon_n)}$. By \eqref{eq2} and \eqref{eq4}
we have $\abs{\psi(\xi'_{\varepsilon_n})} \leqslant C
\abs{\xi'_{\varepsilon_n}}^{\alpha + 1}$ so, up to a
subsequence, we have \eqref{eq70}.

Let $v_n(y)=u_n(\varepsilon_n y)$ be the corresponding solution of \eqref{eq6}. Since $P_n \rightarrow 0$ then for $n$ sufficiently large we have $\frac{\abs{P_n}}{\varepsilon_n} < \frac{r_0}{2\varepsilon_n}$. Next we use the identity \eqref{eq101} for $v_n$
\begin{equation}
\label{eq71}
\int_{\partial \Omega_{\varepsilon_n}} \left( \frac{\abs{\nabla v_n}^2}{2} + \frac{v_n^2}{2} - \frac{v_n^{p+1}}{p+1} \right) \nu_j d\sigma = 0
\qquad j=1,\dots,N
\end{equation}
where $\nu =(\nu_j)$ is the unit outer normal field on $\partial \Omega_{\varepsilon_n}$.  Now we divide $\partial \Omega_{\varepsilon_n}$ into two parts, $\partial \Omega_{\varepsilon_n} = G_{\frac{r_0}{\varepsilon_n}} \cup \left( \partial \Omega_{\varepsilon_n} \backslash \, G_{\frac{r_0}{\varepsilon_n}}\right)$ where $G_{\frac{r_0}{\varepsilon_n}} = \left\{\left(y', \frac{1}{\varepsilon_n}\psi(\varepsilon_n y')\right)\,\,\,| \,\,\,\abs{y'} < \frac{r_0}{\varepsilon_n} \right\}$. By Proposition \ref{prop11} the integral on $\left( \partial \Omega_{\varepsilon_n} \backslash \, G_{\frac{r_0}{\varepsilon_n}}\right)$ is exponentially small, so \eqref{eq71} becomes
\begin{equation}
\int_{G_{\frac{r_0}{\varepsilon_n}}} \left( \frac{\abs{\nabla v_n}^2}{2} + \frac{v_n^2}{2} - \frac{v_n^{p+1}}{p+1} \right) \nu_j d\sigma = O(\text{e}^{\frac{-\tau_0}{\varepsilon_n}})
\end{equation}
which implies
\begin{equation}
 \int_{\abs{y'}<\frac{r_0}{\varepsilon_n}} \left( \frac{\abs{\nabla v_n}^2}{2} + \frac{v_n^2}{2} - \frac{v_n^{p+1}}{p+1} \right)\Big(y',\frac{1}{\varepsilon_n}\psi(\varepsilon_n y')\Big) \frac{\partial\psi}{\partial y_j}(\varepsilon_n y') dy' = O(\text{e}^{\frac{-\tau_0}{\varepsilon_n}}).
\end{equation}
Finally, using \eqref{eq2} - \eqref{eq4}, we have
\begin{equation}
\int_{\abs{y'}<\frac{r_0}{\varepsilon_n}} \left( \frac{\abs{\nabla v_n}^2}{2} + \frac{v_n^2}{2} - \frac{v_n^{p+1}}{p+1} \right)\Big(y',\frac{1}{\varepsilon_n}\psi(\varepsilon_n y')\Big) \frac{\partial Q}{\partial y_j}(y') dy' = O({\varepsilon_n}^{\beta - \alpha}) \label{eq72}
\end{equation}
for $j=1,\dots,N-1$. Now we use \eqref{eq70}, the properties of $v_n$ stated in Proposition \ref{prop11} and Lebesgue's Dominated Convergence Theorem to pass to the limit in \eqref{eq72}, and to get
\begin{equation}
\label{eq73}
\int_{\R^{N-1}}\left(\frac{1}{2}\abs{\nabla U_{\xi'}}^2 + \frac{1}{2}U_{\xi'}^2 - \frac{1}{p+1}U_{\xi'}^{p+1} \right)(y',0)\frac{\partial Q}{\partial y_j}(y') d y' = 0
\end{equation}
for $j=1,...,N-1$, which means that $\mathcal{L}(\xi')=0$.
\end{proof}

\section{An exact multiplicity result} \label{exact}

The aim of this section is to prove the second part of Theorem \ref{mainteo}.

\begin{proof}[Proof of \eqref{eq175}]
By \eqref{eq150} we already know that
\begin{equation}
\label{eq76}
\# \left\{\text{Single peak solutions of \eqref{eq1} concentrating at} \,\, 0 \right\} \geqslant \# \Xi.
\end{equation}
Suppose, by contradiction, that \eqref{eq76} is a strict inequality. Since $\# \Xi < \infty$, by Proposition \ref{prop12} there exists $\xi' \in \Xi$, a sequence $\varepsilon_n \to 0$ and two distinct single peak solutions $u_{1,n}$ and $u_{2,n}$ of \eqref{eq1} with $\varepsilon = \varepsilon_n$ such that if $P_{1,n}$ and $P_{2,n}$ are their peaks, we have
\begin{equation}
\label{eq77}
\lim_{n \rightarrow \infty} \frac{P_{1,n}}{\varepsilon_n} = (\xi',0) = \lim_{n \rightarrow \infty} \frac{P_{2,n}}{\varepsilon_n}.
\end{equation}
Since $u_{1,n} \not\equiv u_{2,n}$ we can consider the function
\begin{equation}
\label{eq78}
\phi_n(y) = \frac{v_{1,n}(y)- v_{2,n}(y)}{\norm{v_{1,n}- v_{2,n}}_{L^{\infty}(\Omega_{\varepsilon_n})}} \,\,\,\,\,\, y \in \overline{\Omega}_{\varepsilon_n}
\end{equation}
where $v_{1,n}(y) = u_{1,n}(\varepsilon_n y)$ and $v_{2,n}(y) = u_{2,n}(\varepsilon_n y)$ are the corresponding solutions of \eqref{eq6}. By Proposition \ref{prop11}
\begin{equation}
\norm{v_{1,n} - U_{\xi'}}_{C^1(\overline{\Omega}_{\varepsilon_n})} \to 0 \,,\,\,\norm{v_{2,n} - U_{\xi'}}_{C^1(\overline{\Omega}_{\varepsilon_n})} \to 0 \qquad \text{as}\,\,\, n \rightarrow \infty
\label{eq79}
\end{equation}
where $U_{\xi'}(\cdot) = U(\cdot - (\xi',0))$. Then $\phi_n$ satisfies
\begin{align}
\label{eq80}
\begin{cases}
  - \Delta \phi_n + \phi_n = c_n(y)\phi_n,  &\Omega_{\varepsilon_n} \\
  \frac{\partial \phi_n}{\partial \nu} = 0,& \partial \Omega_{\varepsilon_n}
\end{cases}
\end{align}
where
\begin{equation}
\label{eq81}
c_n(y) = p \int_0^1 \left( t v_{1,n}(y) + (1-t)v_{2,n}(y)\right)^{p-1}dt.
\end{equation}
Again, by Proposition \ref{prop11}, $\norm{c_n - pU_{\xi'}^{p-1}}_{C^0(\overline{\Omega}_{\varepsilon_n})} \rightarrow 0$ as $n \rightarrow \infty$. By the arguments
used in the proof of Theorem 3 of \cite{LNT} we can prove that $\phi_n \rightarrow \phi$ in $C_{loc}^2(\R_+^N)$ where $\phi$ is a bounded solution of
\begin{align}
\label{eq82}
\begin{cases}
  - \Delta \phi + \phi = pU_{\xi'}^{p-1}\phi,  &\R_+^N \\
  \frac{\partial \phi}{\partial y_N} = 0,& \partial \R_+^N \, .
\end{cases}
\end{align}
Standard arguments imply that $\phi = \sum_{i=1}^{N-1} a_i \frac{\partial U_{\xi'}}{\partial y_i}$ for some constants $a_i \in \R$. By \eqref{eq101} we have
\begin{equation*}
\int_{\partial \Omega_{\varepsilon_n}} \left( \frac{\abs{\nabla v_{1,n}}^2}{2} + \frac{v_{1,n}^2}{2} - \frac{v_{1,n}^{p+1}}{p+1} \right) \nu_j d\sigma = \int_{\partial \Omega_{\varepsilon_n}} \left( \frac{\abs{\nabla v_{2,n}}^2}{2} + \frac{v_{2,n}^2}{2} - \frac{v_{2,n}^{p+1}}{p+1} \right) \nu_j d\sigma
\end{equation*}
which we rewrite as
\begin{equation}
\label{eq84}
\int_{\partial \Omega_{\varepsilon_n}} \left( \nabla{a_n}(y)\cdot\nabla \phi_n(y) + a_n(y)\phi_n(y) - b_n(y)\phi_n(y) \right) \nu_j(y) d\sigma = 0
\end{equation}
where
\begin{align}
\label{eq85}
a_n(y) = \frac{1}{2}(v_{1,n} + v_{2,n})(y), & \,\,\, b_n(y)=\int_0^1 \left( t v_{1,n}(y) + (1-t)v_{2,n}(y)\right)^{p}dt. \\
\norm{a_n - U_{\xi'}}_{C^1(\overline{\Omega}_{\varepsilon_n})} \rightarrow 0 \label{eq86} , & \,\,\, \norm{b_n - U_{\xi'}^p}_{C^0(\overline{\Omega}_{\varepsilon_n})} \rightarrow 0
\,\,\text{  ( by Proposition \ref{prop11} ) }.
\end{align}
We want to pass to the limit in \eqref{eq84}. To do this we first observe that by definition $\abs{\phi_n} \leqslant 1$ and also $\max \{\abs{\nabla \phi_n} \,;\, {\partial \Omega_{\varepsilon_n}}\}$ is uniformly bounded (to check the last claim one can use the diffeomorphism which straightens the boundary portion near a point $P \in \Omega$, as in \cite{LNT,NT1,NT2}, then apply Schauder's estimates near the boundary, see for example \cite[Chapter 6]{GT}, to the corresponding elliptic equation satisfied in a half ball as well as the $C^{2,\alpha}$ uniform regularity of the domain). At this point, using \eqref{eq86} and Lebesgue's Dominated Convergence Theorem, we can proceed as in the proof of Proposition \ref{prop12} to pass to the limit in \eqref{eq84} and conclude that
\begin{equation}
\label{eq87}
\int_{\R^{N-1}} \left( \nabla{U_{\xi'}}\cdot\nabla \phi + U_{\xi'}\phi - U_{\xi'}^{p}\phi \right)(y',0) \frac{\partial Q}{\partial y_j}(y') dy' = 0
\end{equation}
since $\phi = \sum_{i=1}^{N-1} a_i \frac{\partial U_{\xi'}}{\partial y_i}$ we have, for any $j=1,\dots,N-1$
\begin{equation}
\label{eq88}
\sum_{i=1}^{N-1} a_i \int_{\R^{N-1}}\left[ \nabla U_{\xi'} \cdot \nabla \frac{\partial U_{\xi'}}{\partial y_i} + U_{\xi'} \frac{\partial U_{\xi'}}{\partial y_i} - U_{\xi'}^p \frac{\partial U_{\xi'}}{\partial y_i} \right](y',0) \frac{\partial Q}{\partial y_j}(y') dy' =0.
\end{equation}
From \eqref{eq174} we get that the linear system \eqref{eq88} has only the trivial solution and so $\phi \equiv 0$.

To obtain a contradiction we consider a sequence $y_n \in \overline{\Omega}_{\varepsilon_n}$ such that $\abs{\phi_n(y_n)} = \norm{\phi_n}_{L^{\infty}(\Omega_{\varepsilon_n})} = 1$. If $y_n \in \Omega_{\varepsilon_n}$ and $\phi(y_n) = 1$ ($\phi(y_n) = -1$) we have $\Delta \phi_n(y_n) \leqslant 0$ ($- \Delta \phi_n(y_n) \leqslant 0$) and by \eqref{eq80}, in any case, $c_n(y_n) \geqslant 1$. If $y_n \in \partial \Omega_{\varepsilon_n}$ we also have $c_n(y_n) \geqslant 1$ because of the boundary condition in \eqref{eq80}. If $y_n$ is bounded a contradiction arises since $\phi_n \rightarrow \phi \equiv 0$ in $C_{loc}^2(\R_+^N)$ and if $\abs{y_n} \rightarrow \infty$ we have $c_n(y_n) \rightarrow 0$ (because $\norm{c_n - U_{\xi'}}_{C^0(\overline{\Omega}_{\varepsilon_n})} \to 0$) which contradicts $c_n(y_n) \geqslant 1$ and so the theorem follows.
\end{proof}

\section{Examples and applications}\label{s7}

We would like to present some applications of the previous results.
\begin{example}
Suppose that $N=3$ and that $\psi(x_1,x_2) = x_1^5 - x_1x_2^4$. Then by direct computations we infer that
\begin{equation}
\label{eq89}
\mathcal{L}(\xi_1,\xi_2) = \left(4c_4 + 30c_2\xi_1^2 - 6c_2\xi_2^2, -12c_2\xi_1\xi_2 \right)
\end{equation}
where
\begin{equation}
 c_m =  \int_{\R^{2}}\left(\frac{1}{2}\abs{\nabla U(y',0)}^2 + \frac{1}{2}U^2(y',0) -
\frac{1}{p+1}U^{p+1}(y',0) \right)y_j^m d y'  \,.\label{eq89b}
\end{equation} 
Notice that
\begin{align}
& \text{$c_m > 0$ if $m$ is a positive even integer} \label{eq90} \\
& \text{$c_m = 0$ whenever $m=0$ or $m$ is odd.} \label{eq91}
\end{align}
We left these calculations to the Appendix \ref{apb}. In this case it is easy to check that $\Xi = \left\{\left(0,\sqrt{\frac{2c_4}{3c_2}}\right),\left(0,-\sqrt{\frac{2c_4}{3c_2}}\right) \right\}$ and that
\begin{align}
\label{eq92}
\text{Jac }\mathcal{L}(\xi_1,\xi_2) =
\begin{pmatrix}
60c_2\xi_1  & -12c_2\xi_2 \\
-12c_2\xi_2 & -12c_2\xi_1
\end{pmatrix}.
\end{align}
Clearly, by Theorem \eqref{mainteo}, in this case \eqref{eq1} admits exactly two boundary single peak solutions concentrating at $P=0$.
\end{example}

\begin{proposition}[Non existence result] \label{teo14}
Suppose that, in a neighborhood of $0$, $\Omega$ is the graph of $\psi(x') = \sum_{j=1}^{N-1} a_j x_j^{\alpha_j}$ where $a_j \in \R \setminus \{0\}$ and $\alpha_j \geqslant 3$ are positive integers. If at least one of the integers $\alpha_j$ is odd then there is no boundary single peak solution of \eqref{eq1} concentrating at $P=0$.
\end{proposition}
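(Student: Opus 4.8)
The plan is a contradiction argument driven by the Pohozaev identities of Lemma~\ref{le9}. Suppose $u_{\varepsilon_n}$ is a sequence of boundary single peak solutions of \eqref{eq1} concentrating at $0$, with peaks $P_n=(\xi_n',\psi(\xi_n'))$, $\xi_n'\to0$; write $v_n(y)=u_{\varepsilon_n}(\varepsilon_n y)$, $E(w)=\frac12|\nabla w|^2+\frac12 w^2-\frac1{p+1}w^{p+1}$, $g(|z'|)=E(U)(z',0)$, and fix an index $j_0$ with $\alpha_{j_0}$ odd. I would use only two inputs: the pointwise exponential decay of $v_n$ and $\nabla v_n$ about $P_n/\varepsilon_n$ from Proposition~\ref{prop11}; and the moment information that $c_l:=\int_{\R^{N-1}}g(|z'|)z_k^l\,dz'$ (independent of the coordinate $k$ by radial symmetry) vanishes for $l$ odd and for $l=0$, and is strictly positive for even $l\ge2$ --- this is \eqref{eq90}--\eqref{eq91}, the vanishing of $c_0$ being also the Pohozaev identity \eqref{eq101} for the radial solution $U$ on $\mathcal D=\R_+^N$ with $j=N$ (for which $\nu_N\equiv-1$). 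The feature of these domains that makes the computation work is that $\partial_{y_j}\psi=a_j\alpha_j x_j^{\alpha_j-1}$ depends on $x_j$ alone.

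The first, and hardest, step is $|\xi_n'|=O(\varepsilon_n)$. Under hypothesis \eqref{eq97} this is exactly \eqref{eq97b}, but \eqref{eq97} fails for $\psi=\sum_j a_j x_j^{\alpha_j}$ as soon as the $\alpha_j$ are not all equal, so I would argue directly from the monomial structure. If $|\xi_n'|/\varepsilon_n\to\infty$ along a subsequence, choose (on a further subsequence) a fixed index $k$ realizing $|\xi_{n,k}|=\max_j|\xi_{n,j}|$, so that $\xi_{n,k}/\varepsilon_n\to\infty$ too. Recentering, $\widetilde v_n(y)=u_{\varepsilon_n}(\varepsilon_n y+P_n)\to U$ in $C^1_{loc}$, and parametrizing $\partial\widetilde\Omega_n$ near $0$ exactly as in the derivation of \eqref{eq105}, the identity \eqref{eq101} with index $k$, after inserting $\partial_{y_k}\psi(\varepsilon_n y'+\xi_n')=a_k\alpha_k(\varepsilon_n y_k+\xi_{n,k})^{\alpha_k-1}$ and expanding, becomes
\begin{equation*}
a_k\alpha_k\sum_{l=0}^{\alpha_k-1}\binom{\alpha_k-1}{l}\varepsilon_n^{\,l}\,\xi_{n,k}^{\,\alpha_k-1-l}\,M_n^{(l)}=O(e^{-c/\varepsilon_n}),\qquad M_n^{(l)}:=\int E(\widetilde v_n)\bigl(y',t_n(y')\bigr)y_k^l\,dy',
\end{equation*}
with $t_n(y')=\varepsilon_n^{-1}\bigl(\psi(\varepsilon_n y'+\xi_n')-\psi(\xi_n')\bigr)$. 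Here $M_n^{(0)}=O(e^{-c/\varepsilon_n})$ by \eqref{eq101} with index $N$; $M_n^{(1)}=o(\varepsilon_n)$ by the estimate on $a_n$ via \eqref{eq103} already carried out in the proof of \eqref{eq97b}; and $M_n^{(l)}\to c_l$ for every $l$ by dominated convergence (the domain fills $\R^{N-1}$, the integrand decays uniformly like $e^{-\lambda|y'|}|y'|^l$, and $t_n(y')\to0$). Dividing by $\varepsilon_n^2\,\xi_{n,k}^{\,\alpha_k-3}$ (admissible since $\alpha_k\ge3$) and letting $n\to\infty$, the right-hand side tends to $0$ because $|\xi_{n,k}|\gg\varepsilon_n$, the $l=0$ term because $|\xi_{n,k}|<r_0$, the $l=1$ term because $\xi_{n,k}\to0$, the terms with $l\ge3$ because $\varepsilon_n/\xi_{n,k}\to0$, and the $l=2$ term to a positive multiple of $c_2$; hence $c_2=0$, which is absurd. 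Therefore $P_n/\varepsilon_n$ is bounded, and along a subsequence $P_n/\varepsilon_n\to(\xi',0)$.

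In the second step I apply \eqref{eq101} to $v_n$ with index $j_0$, parametrize $\partial\Omega_{\varepsilon_n}$ near $0$ by $\bigl(y',\varepsilon_n^{-1}\psi(\varepsilon_n y')\bigr)$, use $\partial_{y_{j_0}}\psi(\varepsilon_n y')=a_{j_0}\alpha_{j_0}\varepsilon_n^{\alpha_{j_0}-1}y_{j_0}^{\alpha_{j_0}-1}$, and divide by $a_{j_0}\alpha_{j_0}\varepsilon_n^{\alpha_{j_0}-1}$, obtaining
\begin{equation*}
\int_{|y'|<r_0/\varepsilon_n}E(v_n)\Bigl(y',\frac{1}{\varepsilon_n}\psi(\varepsilon_n y')\Bigr)y_{j_0}^{\alpha_{j_0}-1}\,dy'=O\bigl(\varepsilon_n^{-(\alpha_{j_0}-1)}e^{-c/\varepsilon_n}\bigr)=o(1).
\end{equation*}
By Proposition~\ref{prop11} the integrand converges to $g(|y'-\xi'|)\,y_{j_0}^{\alpha_{j_0}-1}$ and is dominated, uniformly in $n$, by the integrable function $Ce^{-\lambda|y'|}(1+|y'|)^{\alpha_{j_0}-1}$, so Lebesgue's theorem yields $\int_{\R^{N-1}}g(|z'|)(z_{j_0}+\xi_{j_0})^{\alpha_{j_0}-1}\,dz'=0$. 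Expanding the binomial and integrating termwise against $g(|z'|)$ kills every odd-order term and the $l=0$ term, leaving
\begin{equation*}
\sum_{l\text{ even},\ 2\le l\le\alpha_{j_0}-1}\binom{\alpha_{j_0}-1}{l}c_l\,\xi_{j_0}^{\,\alpha_{j_0}-1-l}=0.
\end{equation*}
Since $\alpha_{j_0}-1$ is even, every exponent $\alpha_{j_0}-1-l$ appearing above is even, so $\xi_{j_0}^{\,\alpha_{j_0}-1-l}\ge0$, and the $l=\alpha_{j_0}-1$ term is $c_{\alpha_{j_0}-1}>0$; hence the left-hand side is $\ge c_{\alpha_{j_0}-1}>0$, contradicting the equality, and the proposition follows.

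I expect Step~1 to be the genuine difficulty: \eqref{eq97} is generally unavailable here, so one cannot invoke Proposition~\ref{prop11} and must instead exploit the monomial form of $\psi$; the delicate point is the normalization by $\varepsilon_n^2\,\xi_{n,k}^{\,\alpha_k-3}$ with $k$ taken as the dominant component of $\xi_n'$ (so that $\xi_{n,k}$ outweighs $\varepsilon_n$), together with the two a priori estimates $M_n^{(0)}=O(e^{-c/\varepsilon_n})$ and $M_n^{(1)}=o(\varepsilon_n)$. The remaining passages to the limit in both steps are routine given the exponential decay of Proposition~\ref{prop11}.
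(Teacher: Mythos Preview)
Your proof is correct, and in one respect more careful than the paper's. The paper's argument is brief: it shows that the $j$-th component of $\mathcal{L}$ (for an index with $\alpha_j$ odd) is a sum of nonnegative terms with at least one strictly positive, hence never vanishes --- this is exactly the moment computation you carry out in Step~2 --- and then concludes by citing Proposition~\ref{prop12}. You correctly note that Proposition~\ref{prop12} rests on condition~\eqref{eq97}, which can fail for $\psi=\sum_j a_j x_j^{\alpha_j}$ when the $\alpha_j$ are not all equal (the homogeneous part $Q$ picks out only the terms of minimal degree, so $\nabla\Delta Q(e_j)=0$ at any unit vector $e_j$ with $\alpha_j$ strictly above the minimum). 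Your Step~1 repairs this by re-deriving the key bound $|P_n|=O(\varepsilon_n)$ directly from the monomial structure, exploiting that $\partial_k\psi$ depends on $x_k$ alone and feeding in the two estimates $M_n^{(0)}=O(e^{-c/\varepsilon_n})$ and $M_n^{(1)}=o(\varepsilon_n)$; the second of these is precisely the argument for $|a_n|=o(\varepsilon_n)$ in the paper's proof of~\eqref{eq97b}, which indeed does not use~\eqref{eq97}. When all $\alpha_j$ coincide, \eqref{eq97} does hold and the paper's citation of Proposition~\ref{prop12} is legitimate; your argument covers the proposition as stated, with possibly distinct $\alpha_j$.
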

\begin{proof}
It suffices to look at the $j$-component of $\mathcal{L}$
\begin{align}
\mathcal{L}_j(\xi) &= \alpha_j a_j \int_{\R^{N-1}}\left(\frac{1}{2}\abs{\nabla U}^2 + \frac{1}{2}U^2 - \frac{1}{p+1}U^{p+1} \right)(y',0)(y_j + \xi_j)^{\alpha_j-1} dy \notag \\
& = \alpha_j a_j\sum_{k=0}^{\alpha_j-1} \binom{\alpha_j -1}{k}c_{k}\xi_j^{\alpha_j -1 - k} \notag
\quad(\hbox{see \eqref{eq89b} for the definition of }c_k) \\
& = \alpha_j a_j\sum_{k=1}^{\frac{\alpha_j-1}{2}} \binom{\alpha_j -1}{2k}c_{2k}\xi_j^{\alpha_j -1 - 2k} \,\,\,\,\,\,\,\,\,\,\,\text{by \eqref{eq91}}. \notag
\end{align}
Observe that all terms in the last sum have the same sign and by \eqref{eq90} we conclude that $\mathcal{L}(\xi') \neq 0$ for all $\xi' \in \R^{N-1}$. Therefore the Theorem follows from Proposition \ref{prop12}.
\end{proof}

\begin{theorem}
Suppose that $\Omega \subset \R^2$ is a $C^{\infty}$ domain and let $P_0 \in \partial \Omega$ be a point such that $H(P_0) = 0$, where $H$ denotes the mean curvature of $\partial \Omega$. Suppose also that the function $H$ has a nonzero derivative at $P_0$, then \eqref{eq1} possesses at most one boundary single peak solution concentrating at $P_0$. More precisely, if $m$ is the order of the first nonzero derivative of $H$ at $P_0$ then
\begin{enumerate}
    \item[$(i)$] If $m$ is odd then there is no boundary single peak solution of \eqref{eq1} concentrating at $P_0$;
    \item[$(ii)$] If $m$ is even then there is exactly one boundary single peak solution of \eqref{eq1} concentrating at $P_0$.
\end{enumerate}
\end{theorem}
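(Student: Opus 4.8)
The plan is to obtain this as an application of Theorem \ref{mainteo}, by rewriting the curvature hypothesis at $P_0$ as the parametrization assumptions \eqref{eq2}--\eqref{eq97} and then counting the stable zeros of the field \eqref{eq49}. After a rigid motion of $\R^2$ (which changes neither the equation nor the number of boundary single peak solutions concentrating at $P_0$) we may assume $P_0=0$, that $\{x_2=0\}$ is the tangent line to $\partial\Omega$ at $0$ and $\nu(0)=(0,-1)$, and write $\partial\Omega$ near $0$ as the graph $x_2=\psi(x_1)$, so $\psi(0)=\psi'(0)=0$. In dimension two the mean curvature along this graph is $H(x_1)=\pm\psi''(x_1)(1+\psi'(x_1)^2)^{-3/2}=\pm\psi''(x_1)g(x_1)$ with $g(0)=1$. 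Differentiating $H=\pm\psi''g$ by Leibniz's rule and using $\psi'(0)=0$, one checks inductively that in $(\psi''g)^{(j)}(0)$ every summand other than $\psi^{(j+2)}(0)g(0)$ already vanishes once $\psi^{(2)}(0),\dots,\psi^{(j+1)}(0)$ do; hence $H^{(k)}(0)=0$ for $0\le k\le m-1$ forces $\psi^{(j)}(0)=0$ for $2\le j\le m+1$, while $\psi^{(m+2)}(0)=\pm H^{(m)}(0)\neq0$. Therefore $\psi=Q+R$ near $0$, where $Q(x_1)=c\,x_1^{m+2}$ with $c=\psi^{(m+2)}(0)/(m+2)!\neq0$, and $R=\psi-Q$ vanishes to order $m+3$. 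So \eqref{eq2} holds, \eqref{eq3} holds with $\alpha=m+1$, \eqref{eq4} holds with $\beta=m+2>\alpha$, and --- since $N=2$ gives $S^{N-2}=\{\pm1\}$ and $\nabla\Delta Q(\pm1)=c(m+2)(m+1)m\,(\pm1)^{m-1}\neq0$ --- condition \eqref{eq97} holds as well; for $m\ge2$ one also has $\alpha=m+1\ge3$, so the full hypotheses of Theorem \ref{mainteo} are available.

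Next I would evaluate \eqref{eq49}, which for $N=2$ is the scalar function
\[
\mathcal{L}(\xi_1)=\int_{\R}f(y_1)\,Q'(y_1+\xi_1)\,dy_1=c(m+2)\sum_{k=0}^{m+1}\binom{m+1}{k}c_k\,\xi_1^{\,m+1-k},\qquad c_k:=\int_{\R}f(y_1)\,y_1^{\,k}\,dy_1,
\]
where $f(y_1)=\bigl(\tfrac12|\nabla U|^2+\tfrac12U^2-\tfrac{1}{p+1}U^{p+1}\bigr)(y_1,0)$. Three facts about these moments finish the analysis: $c_0=0$ (this is \eqref{eq101} for $U$ on $\R^2_+$ with $j=2$, where $\nu=(0,-1)$); $c_k=0$ for odd $k$ (because $U$ is radial, so $f$ is even); and $c_k>0$ for even $k\ge2$ --- the last two being exactly \eqref{eq90}--\eqref{eq91}, established in Appendix \ref{apb}. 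Hence only the terms with $k$ even, $2\le k\le m+1$, survive in $\mathcal{L}$. If $m$ is even, the surviving powers $\xi_1^{\,m+1-k}$ are odd, the smallest being $\xi_1^{1}$, so $\mathcal{L}(\xi_1)$ equals $c(m+2)\,\xi_1$ times the polynomial $\sum_{k\ \text{even},\,2\le k\le m}\binom{m+1}{k}c_k\,\xi_1^{\,m-k}$ in $\xi_1^2$, whose coefficients all have the sign of $c$ and whose constant term $(m+1)c_m$ is nonzero; thus $\xi_1=0$ is the unique zero of $\mathcal{L}$ and it is nondegenerate, $\mathcal{L}'(0)=\pm H^{(m)}(0)c_m/m!\neq0$. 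If $m$ is odd and $m\ge3$, the surviving powers are even, including $\xi_1^{0}$ with nonzero coefficient $c_{m+1}$, and all coefficients share a sign; hence $\mathcal{L}$ has no zero at all.

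Now I collect the consequences. If $m$ is even, $\Xi=\{0\}$ is finite and \eqref{eq174} holds at its only point since $\det\mathrm{Jac}\,\mathcal{L}(0)=\mathcal{L}'(0)\neq0$; as \eqref{eq2}--\eqref{eq4} and \eqref{eq97} hold, Theorem \ref{mainteo} gives exactly $\#\Xi=1$ boundary single peak solution concentrating at $P_0$ --- this is $(ii)$. If $m$ is odd and $m\ge3$, $\Xi=\emptyset$ and \eqref{eq174} holds vacuously, so Theorem \ref{mainteo} (the case $\Xi=\emptyset$ noted in the remark after \eqref{eq175}) gives that no boundary single peak solution concentrates at $P_0$ --- this is $(i)$. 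The remaining case is $m=1$, where $\alpha=m+1=2$ falls just outside \eqref{eq3}: here the leading part $Q(x_1)=c\,x_1^{3}$ is a genuine cubic, and I would argue non-existence directly by the Pohozaev-type computation used to prove \eqref{eq97b}; for a family concentrating at $P_0$, rescaling about the peak and passing to the limit in \eqref{eq101} (the relevant Taylor expansions now terminating, $\psi''' $ being constant to leading order) yields the relation $3c\,c_2=0$, which is impossible since $c\neq0$ and $c_2>0$. Hence $(i)$ holds for $m=1$ as well, and the proof is complete.

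The only genuinely nonformal ingredient --- and what I expect to be the real obstacle --- is the positivity $c_k>0$ for even $k\ge2$: it does not follow from $c_0=0$ by a symmetry argument, and is precisely why that computation is deferred to Appendix \ref{apb}. Everything else is bookkeeping: the curvature-to-$\psi$ induction (which must be done carefully enough to conclude that $\psi$ vanishes to order exactly $m+2$, so that \eqref{eq97} can be read off from $Q$), the explicit evaluation of \eqref{eq49}, and --- for the $m=1$ endpoint --- the minor verification that the proof of \eqref{eq97b} does not actually use $\alpha\ge3$.
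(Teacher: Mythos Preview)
Your proposal is correct and follows the same route as the paper: translate the curvature hypothesis into the expansion $\psi(t)=\frac{\psi^{(m+2)}(0)}{(m+2)!}\,t^{m+2}+O(|t|^{m+3})$, expand $\mathcal{L}$ binomially, use \eqref{eq90}--\eqref{eq91} to determine $\Xi$, and invoke Theorem~\ref{mainteo} (for odd $m$ the paper cites Proposition~\ref{teo14}, which is the same mechanism via Proposition~\ref{prop12}). You are in fact more careful than the paper about the endpoint $m=1$, where $\alpha=m+1=2$ lies outside the standing hypothesis $\alpha\ge3$ of \eqref{eq3}; the paper glosses over this, while your direct Pohozaev argument --- noting that for a cubic $Q$ the Taylor expansion in \eqref{eq106} terminates, forcing $3c\,c_2=0$ --- is exactly the right patch.
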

\begin{proof}
Without loss of generality we may assume that $P_0 = 0$ and that
$\partial \Omega$, around $0$, is the graph of a function
$\psi(t)$ such that $\psi(0) = 0 = \psi'(0)$. Since the dimension
$N$ equals 2, the mean curvature is actually the curvature of a plane
curve given by the formula
\begin{equation}
\label{eq115}
H((t,\psi(t))) = H(t) = \frac{\psi''(t)}{(1+(\psi'(t))^2)^{\frac{3}{2}}}.
\end{equation}

Since $H$ has a nonzero derivative at $0$ then, by \eqref{eq115},
$\psi$ also has a nonzero derivative at $0$. If $m$ denotes the
order of the first nonzero derivative of $H$ at $0$, we claim that
$m+2$ is the order of the first nonzero derivative of $\psi$ at
$0$. Indeed, let $n$ denote the order of the first nonzero
derivative of $\psi$ at $0$ and write
\begin{equation*}
H(t) = \psi''(t)g(t) \qquad \text{where} \qquad g(t) =\frac{1}{(1+(\psi'(t))^2)^{\frac{3}{2}}}.
\end{equation*}
Now we use the Taylor expansion for $H$, $\psi''$ and $g$ to write
\begin{align*}
\frac{1}{m!}H^{(m)}(0)t^m + O(\abs{t}^{m+1}) &= \left( \frac{1}{(n-2)!}\psi^{(n)}(0)t^{n-2} + O(\abs{t}^{n-1}) \right) \left( 1 + O(\abs{t}^2)\right) \\
&= \frac{1}{(n-2)!}\psi^{(n)}(0)t^{n-2} + O(\abs{t}^{n-1})
\end{align*}
which readily implies that $n-2=m$. So we can write
\begin{equation*}
\psi(t) = \frac{1}{(m+2)!}\psi^{(m+2)}(0)t^{m+2} + O(\abs{t}^{m+3}).
\end{equation*}
By the condition $H(0) = 0$ it follows that $m \geqslant 1$. If
$m$ is odd then the result follows by Proposition \ref{teo14}, while if
$m$ is even we have
\begin{equation*}
\mathcal{L}(t) = \frac{\psi^{(m+2)}(0)}{(m+1)!} \int_{\R}\left(\frac{1}{2}\abs{\nabla U}^2 + \frac{1}{2}U^2 - \frac{1}{p+1}U^{p+1} \right)(y,0)(y + t)^{m+1} dy.
\end{equation*}
As in the proof of Theorem \ref{teo14}, by \eqref{eq90}-\eqref{eq91}, it follows that $\Xi = \{0\}$ and that $\mathcal{L}'(0) \neq 0 $. Now the result follows by Theorem \ref{mainteo}.
\end{proof}
\appendix
\section{The proof of \eqref{eq90} and \eqref{eq91}} \label{apb}

We recall the definition of $c_m$
\begin{equation*}
 c_m =  \int_{\R^{N-1}}\left(\frac{1}{2}\abs{\nabla U(y',0)}^2 + \frac{1}{2}U^2(y',0) - \frac{1}{p+1}U^{p+1}(y',0) \right)y_i^m d y' \,\,\,\, i=1,\dots,N-1.
\end{equation*}
If $m$ is odd then $c_m = 0$ since the integrand is odd in the variable $y_i$. It remains to evaluate the integral when $m=2k$, $k\in \{0,1,2,\dots\}$. First we write $v(y')=U(y',0)$ and $g(y')=\frac{\partial^2 U}{\partial y_N^2}(y',0)$. By the radial symmetry of $U$ we have
\begin{equation*}
g(y') = \sum_{j=1}^{N-1}\frac{\partial v}{\partial y_j}\frac{y_j}{\abs{y'}^2}.
\end{equation*}
Observe that $v \in H^1(\R^{N-1}) \backslash \{0\}$ satisfies the following equation
\begin{equation}
\label{eq93}
-\Delta v + v -v^p = g \,\,\, \text{ in } \R^{N-1} .
\end{equation}

Testing \eqref{eq93} with $\frac{\partial v}{\partial y_i}\frac{y_i^{2k+1}}{2k+1}$ we obtain
\begin{align}
\int_{\R^{N-1}}\left(-\Delta v + v -v^p \right)\frac{\partial v}{\partial y_i}\frac{y_i^{2k+1}}{2k+1} dy'= \int_{\R^{N-1}} g \frac{\partial v}{\partial y_i}\frac{y_i^{2k+1}}{2k+1} dy' \notag \\
\int_{\R^{N-1}} \nabla v \cdot \nabla \left(\frac{\partial v}{\partial y_i}\frac{y_i^{2k+1}}{2k+1}\right) + (v - v^p)\frac{\partial v}{\partial y_i}\frac{y_i^{2k+1}}{2k+1} =\int_{\R^{N-1}} g \frac{\partial v}{\partial y_i}\frac{y_i^{2k+1}}{2k+1} \label{eq94}
\end{align}
since
\begin{equation*}
\nabla \left(\frac{\partial v}{\partial y_i}\frac{y_i^{2k+1}}{2k+1}\right) = \nabla \left(\frac{\partial v}{\partial y_i}\right)\frac{y_i^{2k+1}}{2k+1} +  \frac{\partial v}{\partial y_i}y_i^{2k}e_i
\end{equation*}
where $e_i=(0,\dots,1,\dots,0)$, then \eqref{eq94} becomes
\begin{align}
\int_{\R^{N-1}}&\left( \nabla v \cdot \nabla \left(\frac{\partial v}{\partial y_i}\right) + v\frac{\partial v}{\partial y_i} - v^p\frac{\partial v}{\partial y_i}\right)\frac{y_i^{2k+1}}{2k+1} = \notag \\ &\int_{\R^{N-1}} g \frac{\partial v}{\partial y_i} \frac{y_i^{2k+1}}{2k+1} - \int_{\R^{N-1}}\left(\frac{\partial v}{\partial y_i} \right)^2y_i^{2k} \notag ;\\
\int_{\R^{N-1}}\frac{\partial}{\partial y_i}&\left( \frac{1}{2}\abs{\nabla v}^2 + \frac{1}{2}v^2 - \frac{1}{p+1}v^{p+1}\right)\frac{y_i^{2k+1}}{2k+1} = \notag \\
&\int_{\R^{N-1}}\sum_{j=1}^{N-1}\frac{\partial v}{\partial y_j}\frac{\partial v}{\partial y_i}\frac{y_jy_i^{2k+1}}{(2k+1)\abs{y'}^2} - \int_{\R^{N-1}}\left(\frac{\partial v}{\partial y_i}\right)^2y_i^{2k} \notag ; \\
- \int_{\R^{N-1}}&\left( \frac{1}{2}\abs{\nabla v}^2 + \frac{1}{2}v^2 - \frac{1}{p+1}v^{p+1}\right)y_i^{2k} = \notag \\
&\int_{\R^{N-1}}\sum_{j=1}^{N-1}U'(\abs{y'})^2\frac{y_j^2y_i^{2k+2}}{(2k+1)\abs{y'}^4} - \int_{\R^{N-1}}U'(\abs{y'})^2\frac{y_i^{2k+2}}{\abs{y'}^2} \notag ;
\end{align}
and then we infer
\begin{align}
&\int_{\R^{N-1}}\left( \frac{1}{2}\abs{\nabla U(y',0)}^2 +  \frac{1}{2}U^2(y',0) - \frac{1}{p+1}U^{p+1}(y',0)\right)y_i^{2k} = \notag \\
&\left( 1 - \frac{1}{2k+1}\right) \int_{\R^{N-1}} U'(\abs{y'})^2 \frac{y_i^{2k+2}}{\abs{y'}^2}  \notag
\end{align}
and the proof of \eqref{eq90} and \eqref{eq91} is now complete.
\section{The proof of \eqref{eq18}} \label{apc}
We have to prove that, for $i=1,\dots,N-1$
\begin{equation}
\label{eq116}
\left(\,\,\int_{\partial \Omega_{\varepsilon}}{ \left| \frac{\partial}{\partial \nu} \frac{\partial U_{\xi}}{\partial y_i} \right|^2  d \sigma } \right)^{\frac{1}{2}} = O(\varepsilon^{\alpha}) \qquad \text{for} \quad \xi=\left(\xi',\frac{1}{\varepsilon}\psi(\varepsilon \xi')\right) ,\,\,\, \abs{\xi'} \leqslant R .
\end{equation}
By the exponential decay of $U$ and its derivatives, it suffices to estimate the integral on $\partial \Omega_{\varepsilon} \cap B_{\frac{r_0}{\varepsilon}}(0)$.
We have
\begin{equation*}
\frac{\partial U_{\xi}}{\partial y_i}(y)= \frac{\partial U}{\partial y_i}(y-\xi) = U'(\abs{y-\xi})\frac{y_i - \xi_i}{\abs{y-\xi}}
\end{equation*}
and
\begin{equation*}
\frac{\partial^2 U_{\xi}}{\partial y_j \partial y_i}(y)=  U''(\abs{y-\xi})\frac{(y_i - \xi_i)(y_j-\xi_j)}{\abs{y-\xi}^2} + U'(\abs{y-\xi}) \left( \frac{\delta_{ij}}{\abs{y-\xi}} - \frac{(y_i - \xi_i)(y_j-\xi_j)}{\abs{y-\xi}^3} \right).
\end{equation*}
Since
\begin{equation*}
\nu(y) = \frac{(\nabla \psi (\varepsilon y'), -1)}{\sqrt{1+ \abs{\nabla \psi (\varepsilon y')}^2}}
\end{equation*}
we get
\begin{align*}
\left| \frac{\partial}{\partial \nu} \frac{\partial U_{\xi}}{\partial y_i}(y) \right| &= \left| \nabla \frac{\partial U_{\xi}}{\partial y_i}(y) \cdot \nu(y) \right| \\
&\leqslant \frac{1}{\sqrt{1+ \abs{\nabla \psi (\varepsilon y')}^2}} \left[ \left( \abs{U''(\abs{y-\xi})} + \frac{\abs{U'(\abs{y-\xi})}}{\abs{y-\xi}} \right) \abs{\nabla \psi(\varepsilon y')} \right. \\
&\qquad \qquad \qquad + \left. \left( \frac{\abs{U''(\abs{y-\xi})}}{\abs{y-\xi}} + \frac{\abs{U'(\abs{y-\xi})}}{\abs{y-\xi}^2} \right) \frac{1}{\varepsilon}\left|  \psi(\varepsilon y') -  \psi(\varepsilon \xi') \right| \right] \\
& \text{by} \,\, \eqref{eq3},\,\eqref{eq4} \,\, \text{and the Mean Value Theorem} \\
&\leqslant \frac{\varepsilon^{\alpha} (  \abs{y'}^{\alpha}  + \abs{y'}^{\beta} + R^{\beta} )   }{\sqrt{1+ \abs{\nabla \psi (\varepsilon y')}^2}} \left( \abs{U''(\abs{y-\xi})} + \frac{\abs{U'(\abs{y-\xi})}}{\abs{y-\xi}} \right).
\end{align*}
Hence,
\begin{align*}
\begin{split}
\int_{\partial \Omega_{\varepsilon} \cap B_{\frac{r_0}{\varepsilon}}(0)} \left| \frac{\partial}{\partial \nu} \frac{\partial U_{\xi}}{\partial y_i} \right|^2  d \sigma \leqslant \varepsilon^{2\alpha} \Bigg[ & \int_{\R^{N-1}}  \abs{U''(\abs{y-\xi})}^2 (  \abs{y'}^{\alpha}  + \abs{y'}^{\beta} + R^{\beta} )^2 dy' \\
&+ \int_{\R^{N-1}} \left(\frac{\abs{U'(\abs{y-\xi})}}{\abs{y'-\xi'}}\right)^2 (  \abs{y'}^{\alpha}  + \abs{y'}^{\beta} + R^{\beta} )^2 dy' \Bigg]
\end{split} \\
\begin{split}
 \leqslant C \varepsilon^{2\alpha} \Bigg[ &1 + \int_{\abs{y'-\xi'} \geqslant 1} \abs{U'(\abs{y-\xi})}^2 (  \abs{y'}^{\alpha}  + \abs{y'}^{\beta} + R^{\beta} )^2 dy'   \\
&+ \int_{\abs{y'-\xi'} \leqslant 1}  (  \abs{y'}^{\alpha}  + \abs{y'}^{\beta} + R^{\beta} )^2 dy'  \Bigg]
\end{split} \\
\leqslant  C \varepsilon^{2\alpha}
\end{align*}
and \eqref{eq116} follows.
\section{The proof of \eqref{eq5}} \label{apd}

We want to prove that, for $i=1, \dots, N-1$ and $\xi=\left(\xi',\frac{1}{\varepsilon}\psi(\varepsilon \xi')\right)$ , $\abs{\xi'} \leqslant R$
\begin{equation}
\label{eq117}
\left\|\frac{\partial U_{\xi}}{\partial y_i} - \frac{\partial U_{\xi'}}{\partial y_i} \right\|_{H^1(\Omega_{\varepsilon})} = \,\, O(\varepsilon^{\alpha})
\end{equation}
where ${U_{\xi'} = U_{(\xi',0)}}$.

By the Mean Value Theorem we can write
\begin{equation*}
\frac{\partial U_{\xi}}{\partial y_i} - \frac{\partial U_{\xi'}}{\partial y_i} =  -\frac{1}{\varepsilon}\psi(\varepsilon \xi')\frac{\partial^2 U_{\xi}}{\partial y_i \partial y_N}\left(y'-\xi',y_N - \frac{\theta}{\varepsilon}\psi(\varepsilon \xi')\right) \qquad \text{for some} \,\,\, \theta \in (0,1)
\end{equation*}
and then
\begin{align*}
\int_{\Omega_{\varepsilon}} \left|\frac{\partial U_{\xi}}{\partial y_i} - \frac{\partial U_{\xi'}}{\partial y_i} \right|^2dy
&\leqslant C \varepsilon^{2\alpha} \int_{\Omega_{\varepsilon}} \left| \frac{\partial^2 U_{\xi}}{\partial y_i \partial y_N}\left(y'-\xi',y_N - \frac{\theta}{\varepsilon}\psi(\varepsilon \xi')\right) \right|^2 dy \\
& \leqslant C \varepsilon^{2\alpha} \int_{\R^N} \exp\Bigg(- \lambda \Big(\abs{y'-\xi'} + \big|y_N - \frac{\theta}{\varepsilon}\psi(\varepsilon \xi')\big|\Big)\Bigg)dy \\
& \leqslant C \varepsilon^{2\alpha} \int_{\R^N} \exp\big(- \lambda\abs{y'-\xi'} \big) \exp \Big(- \lambda\big|y_N - \frac{\theta}{\varepsilon}\psi(\varepsilon \xi')\big|\Big)dy \\
& \leqslant C \varepsilon^{2\alpha} \text{e}^{\lambda \abs{\xi'}}\text{e}^{\lambda \abs{\frac{\psi(\varepsilon \xi')}{\varepsilon}}}  \int_{\R^N}  \text{e}^{-\lambda \abs{y}}dy \\
& \leqslant C \varepsilon^{2\alpha}.
\end{align*}
Similarly we can prove that
\begin{equation*}
\int_{\Omega_{\varepsilon}} \left|\nabla\frac{\partial U_{\xi}}{\partial y_i} - \nabla\frac{\partial U_{\xi'}}{\partial y_i} \right|^2dy \leqslant C \varepsilon^{2\alpha}
\end{equation*}
and \eqref{eq117} follows.
\begin{acknowledgements}
This work was done while S\'ergio L. N. Neves was visiting the Mathematics Department of the University of Rome ``La Sapienza'' whose members he would like to thank for their warm hospitality. S\'ergio L. N. Neves was partially supported by CAPES/Brazil 2038/10-2 and CNPq/Brazil 140934/2010-3.
\end{acknowledgements}

\bibliographystyle{amsplain}

\begin{thebibliography}{10}

\bibitem {AM} A. Ambrosetti and A. Malchiodi, \textit{Perturbation Methods and Semilinear Elliptic Problems on $R^n$}, Birkh\"auser, Progress in Mathematics, vol. 240, (2006).

\bibitem {AMMP} J. G. Azorero, A. Malchiodi, L. Montoro and I. Peral, \textit{Concentration of solutions for some singularly perturbed mixed problems: existence results}, Arch. Rational Mech. Anal. 196 (2010), 907-950.

\bibitem{BL} H. Berestycki and P. L. Lions, \textit{Nonlinear scalar field equations I and II}, Arch. Rat. Mech. Anal. 82 (1983), 313-375.

\bibitem {D} E. N. Dancer, \textit{On the uniqueness of the positive solution of a singularly perturbed problem}, Rocky Mountain Journal of Mathematics 25 (1995), 957-975.

\bibitem {DPFW} M. Del Pino, P. Felmer and J. Wei, \textit{On the role of mean curvature in some singularly perturbed Neumann problems}, SIAM J. Math. Anal. 31 (1999), 63-79.

\bibitem{GT} D. Gilbarg and N. Trudinger, \textit{Elliptic Partial Differential Equations of Second Order}, Springer, Classics in Mathematics (2001).

\bibitem {G} M. Grossi, \textit{On the number of single-peak solutions of the nonlinear Schr\"odinger equation}, Ann. I. H. Poincar\'e - AN 19, 3 (2002) 261-280.

\bibitem {Gui} C. Gui, \textit{Multipeak solutions for a semilinear Neumann problem}, Duke Math. J. 84 (1996), 739-769.

\bibitem{K} M. K. Kwong, \textit{Uniqueness of positive solutions of $\Delta u - u + u^p = 0$ in $\R^n$}, Arch. Rational Mech. Anal. 105 (1989), 243-266.

\bibitem {YYL} Y. Y. Li, \textit{On a singularly perturbed equation with Neumann boundary condition}, Communications in Partial Differential Equations, 23 (1998): 3, 487-545.

\bibitem {LZ} Y. Li and C. Zhao, \textit{Locating the peaks of least-energy solutions to a quasilinear elliptic Neumann problem}, J. Math. Anal. Appl. 336 (2007), 1368-1383.

\bibitem {LNT} C.-S. Lin, W.-M. Ni and I. Takagi, \textit{Large amplitude stationary solutions to a chemotaxis system}, J. of Differential Equations 72 (1988), 1-27.

\bibitem {M} A. Malchiodi, \textit{Concentration of solutions for some singularly perturbed Neumann problems} Geometric Analysis and PDEs, Lecture Notes in Mathematics vol. 1977, CIME (2009).

\bibitem {NT1} W.-M. Ni and I. Takagi, \textit{On the shape of least-energy solutions to a semilinear Neumann problem}, Communications on Pure and Applied Mathematics, 44 (1991): 819-851.

\bibitem {NT2} W.-M. Ni and I. Takagi, \textit{Locating the peaks of least-energy solutions to a semilinear Neumann problem}, Duke Math. J., 70 (1993), 247-281.

\bibitem{W} J. Wei, \textit{On the boundary spike layer solutions to a singularly perturbed Neumann problem}, J. Diff. Eqns 134 (1997), 104-133.

\bibitem{W2} J. Wei, \textit{Uniqueness and critical spectrum of boundary spike solutions}, Proc. Roy. Soc. Edinburgh Sect. A 131 (2001), 1457-1480.

\bibitem {WW} J. Wei and M. Winter, \textit{Higher-order energy expansions and spike locations}, Calc. Var. 20 (2004), 403-430.

\end{thebibliography}

\end{document}